 \theoremstyle{plain}
      \newtheorem{theorem}{Theorem}
      \newtheorem{lemma}{Lemma}[section]
      \newtheorem{problem}{Problem}
      \theoremstyle{definition}
      \title
[Projections and sections of polytopes]
{On polytopes with congruent projections or sections}
\author{Sergii Myroshnychenko}
\address{Department of Mathematics, Kent State University,
Kent, OH 44242, USA} \email{smyroshn@kent.edu}
\author{Dmitry Ryabogin}
\address{Department of Mathematics, Kent State University,
Kent, OH 44242, USA} \email{ryabogin@math.kent.edu}
\thanks{The authors are supported in
part by U.S.~National Science Foundation Grant DMS-1600753}
\keywords{Sections and projections of convex bodies}
\begin{document}

\begin{abstract}
Let $2\le k\le d-1$ and let $P$ and $Q$ be two convex polytopes in ${\mathbb E^d}$. Assume that their projections, $P|H$, $Q|H$, onto every $k$-dimensional subspace $H$,  are congruent. In this paper we show  that  $P$ and $Q$ or $P$ and $-Q$ are translates of each other. We also prove an analogous result for sections by showing that $P=Q$ or $P=-Q$, provided the  polytopes  contain the  origin in their interior and their sections, $P \cap H$, $Q \cap H$, by every $k$-dimensional subspace $H$, are congruent.

\end{abstract}

\maketitle

\section{Introduction}

In this paper we address the following problems (cf., for example, \cite[ Problem 3.2, p. 125 and Problem 7.3, p. 289]{Ga}).
\begin{problem}\label{pr1}
Let $2\le k\le d-1$. Assume that $K$ and $L$ are convex bodies in ${\mathbb E}^d$ such that the projections $K|H$ and $L|H$ are congruent for all $H\in {\mathcal G}(d,k)$. Is $K$ a translate of $\pm L$?
\end{problem}
\begin{problem}\label{pr2}
Let $2\le k\le d-1$. Assume $K$ and $L$ are star bodies in ${\mathbb E}^d$ such that the sections $K\cap H$ and $L\cap H$ are congruent for all $H\in {\mathcal G}(d,k)$. Is $K=\pm L$?
\end{problem}
Here we say that $K|H$, the projection of $K$ onto $H$,  is congruent to $L|H$ if there exists   an orthogonal transformation  $\varphi\in O(k,H)$ in $H$ such that $\varphi(K|H)$ is  a translate of  $L|H$; ${\mathcal G}(d,k)$ stands for the Grassmann manifold of all $k$-dimensional subspaces in ${\mathbb E^d}$.

Golubyatnikov has obtained some partial answers to Problem \ref{pr1} in the case of {\it direct} rigid motions, i.e., when the orthogonal group $O(k)$ is replaced by the special orthogonal group $SO(k)$. In particular, he proved that the answer is affirmative, provided $k=2$ and none of the projections of the bodies have  symmetries with respect to rotations (in other words, the only direct rigid motion taking $K|H$, $L|H$ onto themselves is the identity).
We refer the reader to \cite{Go}, \cite{Ga} (pp. $100-110$), \cite{Ha} (pp. $126-127$), \cite{R1}, \cite{R2}, \cite{R3} and \cite{ACR}, for history and partial results related to  the above problems.

In this paper we give an affirmative answer to Problems \ref{pr1} and \ref{pr2} in the class of convex polytopes. Our first result is

\begin{theorem}\label{th1}
Let $2\le k\le d-1$ and let $P$ and $Q$ be two convex polytopes in ${\mathbb E^d}$ such that their projections $P|H$, $Q|H$, onto every $k$-dimensional subspace $H$,  are congruent. Then there exists $b\in {\mathbb E^d}$ such that $P=Q+b$ or $P=-Q+b$.
\end{theorem}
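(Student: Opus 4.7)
The plan is to prove the theorem by induction on $m := d - k$, with the base case being the hyperplane projection case $m = 1$ (i.e., $k = d - 1$). The inductive step $m \ge 2$ reduces to the base case via hyperplane slicing, while the base case genuinely requires the combinatorial structure of polytopes.

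For the reduction, assume $k \le d - 2$ and fix any $E \in \mathcal{G}(d, d-1)$. For every $k$-subspace $H \subset E$ one has $(P|E)|H = P|H$ and $(Q|E)|H = Q|H$, so $P|E$ and $Q|E$ (viewed in $E \cong \mathbb{E}^{d-1}$) have congruent $k$-projections within $E$. The inductive hypothesis applied inside $E$ produces a sign $\epsilon(E) \in \{+1, -1\}$ and a vector $t_E \in E$ with $P|E = \epsilon(E)\,(Q|E) + t_E$. Where $Q|E$ is not centrally symmetric the sign $\epsilon(E)$ is unique, and a continuity/connectedness argument on the connected Grassmannian $\mathcal{G}(d, d-1) \cong \mathbb{RP}^{d-1}$ then promotes $\epsilon(E)$ to a constant $\epsilon \in \{+1, -1\}$. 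Hence $P$ and $\epsilon Q$ have hyperplane projections that are translates; the classical rigidity principle that a convex body in $\mathbb{E}^d$ ($d \ge 3$) is determined up to translation by its hyperplane projections (equivalently: a continuous $1$-homogeneous function on $\mathbb{R}^d$ whose restriction to every hyperplane through the origin is linear must be linear globally) yields $P = \epsilon Q + b$.

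For the base case $k = d - 1$, translate so that both $P$ and $Q$ have centroid at the origin. Since the centroid of a projection equals the projection of the centroid, the congruence $\varphi_H(P|H) = Q|H + t_H$ forces $t_H = 0$, giving the cleaner relation $\varphi_H(P|H) = Q|H$ with $\varphi_H \in O(H)$. Equating widths yields $P - P = Q - Q$, and equating $(d-1)$-volumes yields $|P|H| = |Q|H|$ for all $H$. The core of the argument is to compress the pointwise family $\{\varphi_H\}_{H \in \mathcal{G}(d, d-1)}$ into a single orthogonal map $\pm\mathrm{id}$ on $\mathbb{E}^d$. Because $P$ is a polytope, for $u$ in an open dense subset of $S^{d-1}$ the combinatorics of $P|u^\perp$ are locally constant and $P|u^\perp$ has trivial orthogonal stabilizer; on this subset $\varphi_H$ is uniquely determined and depends continuously on $H$, so $\det \varphi_H$ is locally constant and (by connectedness of the subset) equal to a single $\epsilon \in \{+1, -1\}$. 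A facet/vertex matching, propagated across varying $H$ via the support-function identity $h_P(\varphi_H^{-1} v) = h_Q(v)$ for $v \in H$, then upgrades the pointwise conclusion to $P = \epsilon Q$.

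The principal obstacle is the base case, specifically the hyperplanes $H$ at which either $P|H$ possesses nontrivial orthogonal symmetries (so $\varphi_H$ is not unique) or the combinatorics of $P|H$ degenerates. Because $P$ is a polytope, these ``bad'' directions form a subset of strictly lower dimension in $\mathcal{G}(d, d-1)$, so continuity and connectedness arguments can be extended across them by invoking the finiteness of the face lattice; making this rigorous --- and in particular excluding the possibility that $\varphi_H$ cycles through non-$\pm\mathrm{id}$ orthogonal transformations on the generic locus --- is the main technical task.
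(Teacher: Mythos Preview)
Your two-stage architecture (base case $k=d-1$, then induction down) matches the paper's, but the base case is where the content lives and your sketch has genuine gaps there.

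First, a concrete error: the centroid of $P|H$ is \emph{not} the projection of the centroid of $P$ (project a triangle in $\mathbb{E}^2$ onto a coordinate axis), so you cannot kill the translations $t_H$ this way. The paper carries the translations $b_\xi$ throughout and removes them only at the end via Theorem~\ref{hedgehog}.

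The central gap is the passage from ``$\det\varphi_H$ is constant'' to ``$P=\epsilon Q$''. Knowing that every $\varphi_H$ lies in $SO(H)$ is far weaker than $\varphi_H=\mathrm{id}$; this is exactly why Golubyatnikov's $SO$-version needs extra hypotheses (no rotational symmetries of projections). You correctly flag this as ``the main technical task'' and then do not do it --- so the proposal is an outline, not a proof. The paper's route is genuinely different and sidesteps this trap: instead of pinning down $\varphi_H$, it fixes a connected component $U_1$ of good directions, uses a Baire-category argument to freeze the vertex bijection, and applies Lemma~\ref{length} to show that corresponding \emph{edges} of the shadow boundaries $\partial_\xi P$ and $\partial_\xi Q$ are parallel and of equal length. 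Hence the facets of $P_\xi$ and $Q_\xi$ are pairwise parallel with equal $(d-2)$-volume, and Minkowski's uniqueness theorem (Theorem~\ref{Minkowski}) forces $P_\xi$ and $\pm Q_\xi$ to be translates. This is where the polytope hypothesis is actually used, and it makes no claim about stabilizers of $P|H$. Your ``generic trivial stabilizer'' assertion is also false as stated: if $P$ is centrally symmetric then every $P|H$ has $-\mathrm{id}$ in its stabilizer. Finally, the sign-consistency step --- both in your base case and in your inductive reduction --- is not a bare connectedness argument on the Grassmannian (the sign is ambiguous wherever $Q|E$ is centrally symmetric); the paper handles it with the functional-equation result Theorem~\ref{hedgehog}.
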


If   projections of the bodies are {\it directly} congruent,  Golubyatnikov  proved Theorem \ref{th1} in the case $k=2$ (the result is contained in the proof of Theorem 2.1.2, p. 19; cf. \cite{Go}, p. 17, Lemma 2.1.4), and in the case $k=3$ under the additional assumption that none of the  $3$-dimensional projections have  rigid motion symmetries (cf. \cite{Go}, p. 48, Theorem 3.2.1);  see also \cite{ACR}, where the assumption on symmetries was relaxed.

Our second result is

\begin{theorem}\label{th2}
Let $2\le k\le d-1$ and let $P$ and $Q$ be two convex polytopes in ${\mathbb E^d}$  containing the origin in their interior. Assume  that their sections, $P \cap H$, $Q \cap H$, by every $k$-dimensional subspace $H$,  are congruent. Then $P=Q$ or $P=-Q$.
\end{theorem}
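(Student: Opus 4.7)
My plan is to reduce Theorem~\ref{th2} to Theorem~\ref{th1} via polar duality. Since $P$ and $Q$ contain the origin in their interiors, the polar polytopes $P^\ast$ and $Q^\ast$ are well-defined convex polytopes also containing the origin in their interiors, and the classical identity
\[
(P \cap H)^{\ast_H} \;=\; P^\ast | H
\]
(with $\ast_H$ denoting polarity taken inside the subspace $H$) converts sections of $P$ through the origin into projections of $P^\ast$. If I could establish that the section congruence $\varphi(P \cap H) = Q \cap H + b$ in fact holds with $b = 0$, then taking polars inside $H$ would produce $\varphi(P^\ast | H) = Q^\ast | H$, so the projections of $P^\ast$ and $Q^\ast$ onto every $k$-plane would be congruent. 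Theorem~\ref{th1} would then give $P^\ast = \pm Q^\ast + c$ for some $c \in \mathbb{E}^d$; the origin-preserving nature of the projections forces $c = 0$, and taking polars once more yields $P = \pm Q$.

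The main obstacle is the presence of a possibly nonzero translation $b \in H$ in the section congruence. Polarity is not translation-equivariant: $(K + b)^{\ast_H}$ is not a translate of $K^{\ast_H}$, so a stray translation would derail the reduction to Theorem~\ref{th1}. I would first prove that, under the hypotheses, $b$ can always be taken to be $0$. A natural route is to track a translation-equivariant center of $P \cap H$ and $Q \cap H$ (for instance the centroid or Steiner point), which must satisfy $\varphi(c(P \cap H)) = c(Q \cap H) + b$, and to use the fact that the origin lies in the relative interior of every section to force this center to coincide as $H$ varies continuously and the combinatorial types of the sections are locally constant on an open dense subset of $\mathcal{G}(d,k)$. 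For polytopes one can hope to exploit the piecewise-analytic dependence of the vertices of $P \cap H$ on $H$ to propagate constraints that rule out $b \neq 0$.

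A secondary obstacle is that, even after reducing to projections of polar polytopes, Theorem~\ref{th1} only gives equality up to a translation $c$; here I would argue $c = 0$ by appealing to the fact that the dualized congruences preserve the origin, inherited from the section congruences fixing the origin, and that $0$ is a distinguished interior point of both $P^\ast$ and $Q^\ast$. The hardest step of the plan, however, is unambiguously the first: removing the translation from the section congruence. Unlike projections, which behave equivariantly under translation of the underlying body, sections through the origin are highly sensitive to the origin's position, so the rigidity claim ``$b = 0$'' genuinely exploits the interaction between all subspaces $H$ simultaneously and the polytopal structure of $P$ and $Q$.
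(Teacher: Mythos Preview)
Your duality reduction is a natural idea, and the identity $(P\cap H)^{\ast_H}=P^{\ast}|H$ is exactly the right tool, but the proposal has a genuine gap at precisely the step you flag as hardest. Showing that the translation $b$ in the section congruence $\varphi(P\cap H)+b=Q\cap H$ can be taken equal to zero is not a preliminary reduction: it is essentially the entire content of Theorem~\ref{th2}. Once one knows $P\cap H=\pm(Q\cap H)$ for every $H$ (equivalently $b=0$ and $\varphi=\pm I$), the conclusion follows immediately from Theorem~\ref{hedgehog} applied to the radial functions, with no need for duality or Theorem~\ref{th1} at all. Your centroid/Steiner-point sketch does not close this gap: from $\varphi(c(P\cap H))+b=c(Q\cap H)$ you obtain only an expression $b=c(Q\cap H)-\varphi(c(P\cap H))$, and neither continuity in $H$, nor local constancy of combinatorial type, nor the fact that $0$ lies in the interior of each section forces this quantity to vanish. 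The centers $c(P\cap H)$, $c(Q\cap H)$ have no tractable relation to one another a priori, and every translation-invariant functional (diameter, width, volume, intrinsic volumes) is useless for separating $b=0$ from $b\neq 0$.

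For contrast, the paper's proof does \emph{not} go through duality. It works directly with the lines containing the edges of $P$ and $Q$: for a generic $\xi\in S^{d-1}$ the section $P\cap\xi^{\perp}$ has vertices $v_i(\xi)=l_i\cap\xi^{\perp}$ lying on a fixed finite family of lines $\{l_i\}$, and a Baire-category argument produces an open set of $\xi$ on which a single permutation $\sigma$ matches each $l_i$ to a line $\tilde l_{\sigma(i)}$ through the corresponding vertex of $Q\cap\xi^{\perp}$. The workhorse is then Lemma~\ref{lines}, an explicit algebraic analysis of four lines whose hyperplane sections give equal-length segments, which forces $l_i=\tilde l_{\sigma(i)}$ for all $i$ or $l_i=-\tilde l_{\sigma(i)}$ for all $i$. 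This yields $P\cap\xi^{\perp}=\pm(Q\cap\xi^{\perp})$ directly, and Theorem~\ref{hedgehog} (with $a_\alpha=0$, hence $b=0$) finishes. In other words, the paper attacks the translation obstacle head-on via the rigidity of line configurations, rather than trying to peel it off beforehand. Your secondary worry about showing $c=0$ after applying Theorem~\ref{th1} is a real but much smaller issue (it can be handled via the remark following Theorem~\ref{hedgehog}, once one checks the $b_\xi$ in \eqref{plus}--\eqref{minus} vanish when the projection congruences carry no translation); the fatal gap is the first one.
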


If   sections of the bodies are {\it directly} congruent   Theorem \ref{th2}  is known in the case $k=2$ (cf. \cite{AC}), and in the case $k=3$ (cf. \cite{ACR}) under the additional assumptions that
diameters of one of the polytopes contain the origin and certain sections related to the diameters
 have no rigid motion symmetries.

We remark that in  both Theorems it is enough to assume that only one of the bodies is a convex polytope. This follows from a result of Klee  \cite{Kl}, who showed that a set in $\mathbb{E}^d$ must be a convex polytope, provided all of its projections on $k$-dimensional subspaces are convex polytopes. It follows by duality (see  \cite{Ga}, formula (0.38), p.  22) that if all sections of a convex body containing the origin in its interior are polytopes, then the body is a polytope.

The paper is organized as follows. In the next section we recall some definitions for convenience of the reader. We  also  prove some auxiliary Lemmata that will be used in Sections 3 and 4, where we prove Theorems \ref{th1} and \ref{th2}.

\vspace{0.5cm}

{\bf Acknowledgements:} We would like to thank Alexander Fish for useful discussions. We are indebted to the referee for their suggestions that considerably improved the paper.

\subsection{Notation}
We denote by $S^{d-1}=\{x\in \mathbb{E}^d:\,|x|=1\}$ the set of all unit vectors in the Euclidean space $\mathbb{E}^d$, and by $S^{d-1}_{\zeta}$$=\{x\in S^{d-1}$$:\,x\cdot \zeta\ < 0\}$ the open hemisphere in the direction $\zeta\in S^{d-1}$; $O$ stands for the origin of $\mathbb{E}^d$. We write that line $l_1$ is parallel to line $l_2$ as $l_1 \parallel l_2$.
For any unit vector $\xi \in S^{d-1}$ we let $\xi^{\perp}$ to be the orthogonal complement of $\xi$ in $\mathbb{E}^{d}$, i.e. the set of all $x \in \mathbb{E}^{d}$ such that $x \cdot \xi = 0$; here $x \cdot \xi$ stands for  a usual scalar product of $x$ and $\xi\in \mathbb{E}^d$. The notation for the orthogonal group $O(k)$ and the special orthogonal group $SO(k)$ is standard. The notation  $\varphi_{\xi}\in O(d-1,\xi^{\perp})$ will be used for
 an orthogonal transformation acting in $\xi^{\perp}$. We agree to denote   by $xy$  the closed interval connecting $x$ and $y$, i.e. all points of the form $ty+(1-t)x, t \in [0,1]$; the shortest arc of the unit circle joining the points $x$ and $y$ on $S^{d-1}$ will be denoted by $[xy]$; $span(a_1, a_2,\dots, a_m)$ stands for the $m$-dimensional subspace that is a linear span of the linearly independent  vectors $a_1,\dots, a_m$;
 $span(A)$ is the linear subspace of smallest dimension containing a set $A  \subset \mathbb{E}^d$.
Also, for any set $A \subset \mathbb{E}^d$ the notation  $A_{\xi}=A|\xi^{\perp}$ is used for the projection of $A$ onto $\xi^{\perp}$. The {\it shadow boundary} of a convex polytope $P$ in direction $\xi$ will be denoted by $\partial_{\xi}P$, i.e., $\partial_{\xi}P=\{x\in P:\, x_{\xi} \in  \partial P_{\xi}\}$;
here $\partial P_{\xi}$ stands for the boundary of the projection $P_{\xi}$. Given a set $B$, $\textrm{conv}(B)$ denotes the smallest convex set containing $B$.

\section{Definitions and Auxiliary Results}

A set $A\subset {\mathbb E^d}$ is convex if for any two points $x$ and $y$ in $A$ the closed line segment $xy$ joining them is in $A$. A convex {\it body} $K\subset {\mathbb E^d}$ is a compact convex set with a non-empty interior (with respect to ${\mathbb E^d}$). A {\it convex polytope} $P\subset {\mathbb E^d}$ is a convex body that is the convex hull of finitely many points (called the vertices of $P$).

Fix $1\le k\le d$. We say that a convex set $B$ is a {\it $k$-dimensional convex polytope}  if there exists an affine $k$-dimensional subspace $M$ of ${\mathbb E^d}$ such that $B\subset M$
and  $B$ is a convex polytope relative to $M$ (i.e., the interior of $B$ is taken with respect to $M$).

We will say that a  subset $D$ of an open hemisphere $S^{d-1}_{\zeta}$, $z\eta\in S^{n-1}$, is {\it geodesically convex}, if for any two points $x$ and $y$ in $D$ the arc of the unit circle  $[xy]$ joining them is in $D$.

We define  a {\it rigid motion} $T_{\xi}$ acting in $\xi^{\perp}$, $ T_{\xi}: \xi^{\perp} \to \xi^{\perp}$,  as  a composition of an orthogonal transformation $\varphi_{\xi}$
and a translation by $a_{\xi}\in \xi^{\perp}$, $T_{\xi}(x)=\varphi_{\xi}(x)+a_{\xi}$ for all $x\in\xi^{\perp}$.

 The supporting hyperplane  $G$  to a convex body $K$ is defined as the hyperplane having common points with $K$ and such that
$K$ lies in one of the two closed half-spaces with the boundary $G$. The {\it support function} of a convex body  is defined as
$h_K(\xi)=\max\limits_{x\in K} x\cdot \xi$ for all $\xi\in {\mathbb E^d}$.

If $L$ is a convex body containing the origin in the interior, its
{\it radial function}   is defined as $\rho_L(\xi)=\max\,\{\lambda>0:\,\lambda \xi\in L\}$ for all $\xi\in {\mathbb E^d}$.
Since $h_K$ and $\rho_K$ are homogeneous functions of degrees $1$ and $-1$ correspondingly, it is enough to consider their values for $\xi\in S^{d-1}$, where both functions are continuous.

Our first auxiliary result will be used in the proof of Theorem \ref{th1}.
\begin{lemma}\label{length}
Consider four points $A,B,C,D \in \mathbb{E}^d$, such that for an open set $U \subset S^{d-1}$ of directions $\xi$, $|A_{\xi}B_{\xi}| = |C_{\xi}D_{\xi}| \neq 0$. Then the intervals $AB$ and $CD$ are parallel and have equal length.
\end{lemma}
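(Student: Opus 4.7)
The plan is to translate the hypothesis into a polynomial identity in the direction variable and then extract the conclusion by comparing quadratic forms. Write $u = B - A$ and $w = D - C$. For any unit vector $\xi$, Pythagoras gives $|A_{\xi}B_{\xi}|^{2} = |u|^{2} - (u\cdot\xi)^{2}$ and $|C_{\xi}D_{\xi}|^{2} = |w|^{2} - (w\cdot\xi)^{2}$, so the assumption rewrites as
\[
(u\cdot\xi)^{2} - (w\cdot\xi)^{2} \;=\; |u|^{2} - |w|^{2} \qquad \text{for every } \xi \in U.
\]
Note also that the non-vanishing hypothesis $|A_{\xi}B_{\xi}|\neq 0$ on $U$ forces $u\neq 0$, and symmetrically $w\neq 0$.

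Next, I would introduce the degree-two homogeneous polynomial
\[
p(\xi) \;=\; (u\cdot\xi)^{2} - (w\cdot\xi)^{2} - \bigl(|u|^{2} - |w|^{2}\bigr)|\xi|^{2}.
\]
By the displayed identity, $p$ vanishes on $U \subset S^{d-1}$, and by homogeneity it then vanishes on the open cone $\{t\xi : t>0,\ \xi \in U\}$ of $\mathbb{E}^d$. A polynomial vanishing on a non-empty open subset of $\mathbb{E}^d$ is identically zero, so $p\equiv 0$. Equivalently, the symmetric matrices $uu^{T} - ww^{T}$ and $(|u|^{2}-|w|^{2})I$ coincide.

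From this matrix equality, taking traces gives $(d-1)(|u|^{2} - |w|^{2}) = 0$, so $|u| = |w|$ because $d \geq 2$. Consequently $uu^{T} = ww^{T}$. Since $u$ and $w$ are non-zero, these are rank-one symmetric matrices, and the equality forces $u$ and $w$ to be proportional; together with $|u|=|w|$ this yields $u = \pm w$. Either way, $AB$ and $CD$ are parallel segments of the same length, which is the claim. The only potentially delicate point is the passage from the sphere-level identity to a polynomial identity on all of $\mathbb{E}^d$; the homogeneity of $p$ makes this routine, so the lemma reduces to a transparent comparison of quadratic forms.
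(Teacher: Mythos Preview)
Your proof is correct. Both you and the paper begin with the same reduction: writing $u=B-A$, $w=D-C$ (the paper uses $ta$ and $sb$ with unit $a,b$) and arriving at the identity
\[
(u\cdot\xi)^{2}-(w\cdot\xi)^{2}=|u|^{2}-|w|^{2}\qquad \forall\,\xi\in U,
\]
which the paper writes in the factored form $t^{2}-s^{2}=(\xi\cdot(ta-sb))(\xi\cdot(ta+sb))$. The divergence is in how the conclusion is extracted. The paper argues geometrically that the product $(\xi\cdot v)(\xi\cdot w)$ of two linear functionals cannot be constant on a spherical cap unless one of the vectors $ta\pm sb$ vanishes, using an argument about intersections of $(d-2)$-dimensional level sub-spheres. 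You instead homogenize to the polynomial $p(\xi)$, pass to the matrix identity $uu^{T}-ww^{T}=(|u|^{2}-|w|^{2})I$, take traces to get $|u|=|w|$, and then use a rank-one comparison to force $u=\pm w$. Your route is cleaner and more algebraic: it bypasses the level-set discussion entirely and reduces the lemma to two lines of linear algebra once the polynomial identity is in hand. The paper's argument, on the other hand, stays on the sphere and is perhaps more in the spirit of the later lemmas, where similar ``not identically constant'' arguments recur.
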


\begin{proof}
Since any parallel translation preserves the length of the projections  of the intervals, we may assume that $A = C= O$. Let $a$ and $b  $ be the unit vectors of directions of $AB$ and $CD$, and $t$ and $s$ be their lengths. In this case, the projections of $B$ and $D$ onto $\xi^{\perp}$ can be found as
$$
B_{\xi} = t a- (\xi \cdot ta) \xi \quad \textrm{and} \quad D_{\xi} = sb - (\xi \cdot sb) \xi, \qquad \forall \xi \in U.
$$
Our goal is to prove that $t = s$ and $a = b$ or $a = -b$.

The condition on the equality of the lengths of projections can be written as
$$
t|a - (\xi \cdot a) \xi| = s|b - (\xi \cdot b) \xi| \qquad \forall \xi \in U,
$$
or
$$
t^2-s^2=t^2(\xi\cdot a)^2-s^2(\xi\cdot b)^2=(\xi\cdot(ta-sb))(\xi\cdot(ta+sb))\qquad \forall \xi \in U.
$$

 We claim   that the right-hand side of the above identity is not  an identically  constant function of $\xi$ on the spherical cap $ U$, unless
$ta-sb=0$ or $ta+sb=0$. Indeed,  if the claim is false, then  dividing the above identity by the lengths  of vectors $ta-sb$, $ta+sb$, we see that the function
$$
f(\xi)=(\xi\cdot v)(\xi\cdot w), \quad v=\frac{ta-sb}{|ta-sb|},\quad w=\frac{ta+sb}{|ta+sb|},
$$
 is identically constant (equal to $\frac{t^2-s^2}{|ta-sb||ta+sb|}$) on $U$.  For $c_1, c_2 \in (-1,1)$ consider  the level sets
$$
L_v(c_1)=\{\xi\in S^{d-1}:\,\xi\cdot v=c_1\},\quad L_w(c_2)=\{\xi\in S^{d-1}:\,\xi\cdot w=c_2\},
$$
of functions
$\xi\to\xi\cdot v$ and $\xi\to \xi\cdot w$. It is clear that  $L_v(c_1)$, $L_w(c_2)$  are the corresponding $(d-2)$-dimensional sub-spheres of $S^{d-1}$. Since $v\neq \pm w$,  $L_v(c_1)\neq L_w(c_2)$ $\forall c_1$, $c_2\in(-1,1)$, and $L_v(c_1)\cap L_w(c_2)$ is
(at most)  an $(d-3)$-dimensional subsphere of $S^{d-1}$.  Taking $c_1, c_2\in (-1,1)$ such that $L_v(c_1)\cap L_w(c_2)\cap U\neq \emptyset$, and  changing $\xi$ in $U\cap (L_v(c_1)\setminus L_w(c_2))$, we see that $\xi\cdot v$ remains constant while $\xi\cdot w$ does not. Thus,  $f(\xi)$ is not constant on $U$, and we obtain a contradiction. The claim is proven, and the proof of the Lemma is finished.
\end{proof}

The next statement shows that one may disregard the set of all directions  for which at least two facets of the projections are orthogonal. Here we say that two affine subspaces  of co-dimension one are orthogonal if  their normal vectors are orthogonal; two facets of a polytope  are orthogonal if the affine subspaces containing them are.

\begin{lemma}\label{perp}
Let $\alpha$ and $\beta$ be two non-parallel $(d-2)$-dimensional subspaces in $\mathbb{E}^d$, such that $\dim (\alpha \cap \beta) = d-3$. Then the set of directions $\xi \in S^{d-1}\setminus (\alpha\cup \beta)$, for which  $\alpha_{\xi}$ is orthogonal to $ \beta_{\xi}$, is a nowhere dense  subset of $S^{d-1}$.
\end{lemma}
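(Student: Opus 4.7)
The plan is to encode the orthogonality condition as the vanishing of a single quadratic polynomial in $\xi$, show this polynomial is not identically zero, and then conclude that its zero set on $S^{d-1}$ is nowhere dense (the exclusion of $\alpha\cup\beta$ being a closed, lower-dimensional set, is harmless).

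First I would fix orthonormal bases $\{n_1,n_2\}$ of $\alpha^\perp$ and $\{n_3,n_4\}$ of $\beta^\perp$. For $\xi\in S^{d-1}\setminus\alpha$, the projection $\alpha_\xi$ is a codimension-one subspace of $\xi^\perp$ whose unit normal inside $\xi^\perp$ must lie in $\alpha^\perp\cap\xi^\perp$; a direct computation gives
\[
m_\alpha(\xi)=\frac{(\xi\cdot n_2)\,n_1-(\xi\cdot n_1)\,n_2}{\sqrt{(\xi\cdot n_1)^2+(\xi\cdot n_2)^2}},
\]
and analogously for $m_\beta(\xi)$. By the paper's definition of orthogonality (via normals), $\alpha_\xi\perp\beta_\xi$ iff $m_\alpha(\xi)\cdot m_\beta(\xi)=0$, which, after clearing the positive denominators, is equivalent to the vanishing of the homogeneous quadratic polynomial
\[
P(\xi)=\bigl[(\xi\cdot n_2)\,n_1-(\xi\cdot n_1)\,n_2\bigr]\cdot\bigl[(\xi\cdot n_4)\,n_3-(\xi\cdot n_3)\,n_4\bigr].
\]

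The main step, and the only genuinely delicate one, is to show $P\not\equiv 0$. Here the geometric hypotheses enter: from $\alpha\neq\beta$ and $\dim(\alpha\cap\beta)=d-3$ one obtains $\dim(\alpha^\perp+\beta^\perp)=3$ and hence $\dim(\alpha^\perp\cap\beta^\perp)=2+2-3=1$. I would pick a unit vector $u$ spanning $\alpha^\perp\cap\beta^\perp$ and extend it to orthonormal pairs $(u,v)$ in $\alpha^\perp$ and $(u,w)$ in $\beta^\perp$, with $v\neq\pm w$ since $\alpha^\perp\neq\beta^\perp$. Using this basis (i.e.\ $n_1=n_3=u$, $n_2=v$, $n_4=w$) and the relations $u\cdot v=u\cdot w=0$, the expression for $P$ collapses to
\[
P(\xi)=(\xi\cdot v)(\xi\cdot w)+(\xi\cdot u)^2\,(v\cdot w).
\]
If $v\cdot w\neq 0$, then $P(u)=v\cdot w\neq 0$; if $v\cdot w=0$, then $(u,v,w)$ is orthonormal and $P\bigl((v+w)/\sqrt 2\bigr)=1/2\neq 0$. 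In both cases the chosen test vector lies outside $\alpha\cup\beta$ (since its inner product with $v$ or $w$ is nonzero), so $P$ is genuinely a nonzero function on $S^{d-1}\setminus(\alpha\cup\beta)$.

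Finally, the zero set of a nonzero polynomial on $S^{d-1}$ is a proper real-algebraic subvariety, hence closed with empty interior, hence nowhere dense; adding back the closed, nowhere dense subset $(\alpha\cup\beta)\cap S^{d-1}$ (a finite union of lower-dimensional equatorial subspheres) preserves this property, completing the proof. The principal obstacle is really just the nontriviality of $P$; everything else is standard, and the argument makes clear that it is precisely the hypothesis $\dim(\alpha\cap\beta)=d-3$ together with $\alpha\neq\beta$ that allows the one-dimensional intersection $\alpha^\perp\cap\beta^\perp$ and the extension to $v\neq\pm w$ on which the nontriviality of $P$ rests.
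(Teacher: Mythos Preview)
Your proof is correct and considerably cleaner than the paper's. The paper parametrizes $\alpha$ and $\beta$ by spanning vectors $a,c_1,\dots,c_{d-3}$ and $b,c_1,\dots,c_{d-3}$, applies an explicit rotation $A_\xi\in O(d)$ taking $\xi$ to $e_d$, and then computes the normals $\tilde n_{\alpha_\xi},\tilde n_{\beta_\xi}$ as $(d-1)\times(d-1)$ generalized cross products; the orthogonality condition becomes a high-degree polynomial equation in the coordinates of $\xi$, whose nontriviality is argued only by the sentence ``by geometric considerations \dots\ it is enough to look at directions $\xi$ close to the ones parallel to $\mathrm{span}(\alpha,\beta)$.'' By contrast, your key observation that the normal to $\alpha_\xi$ inside $\xi^\perp$ must lie in the one-dimensional space $\alpha^\perp\cap\xi^\perp$ reduces the condition to the vanishing of a single homogeneous quadratic $P(\xi)$; choosing the adapted basis $u\in\alpha^\perp\cap\beta^\perp$ (which is one-dimensional precisely because $\dim(\alpha\cap\beta)=d-3$) collapses $P$ to $(\xi\cdot v)(\xi\cdot w)+(\xi\cdot u)^2(v\cdot w)$ and lets you verify $P\not\equiv 0$ by plugging in an explicit vector. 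Both arguments finish the same way (zero set of a nonzero polynomial/analytic function on the sphere is nowhere dense), but your route avoids the rotation $A_\xi$, the determinant formulas, and the vague nontriviality step, and makes transparent exactly where the hypotheses $\alpha\neq\beta$ and $\dim(\alpha\cap\beta)=d-3$ are used.
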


\begin{proof}
Assume that $\alpha \cap \beta = span \{ c_1, c_2,\ldots, c_{d-3}\}$ and $\alpha = span \{a, c_1,\ldots,c_{d-3} \}$, $\beta = span \{b, c_1,\ldots, c_{d-3} \}$, where
$a$, $b$, and $c_j$ are linearly independent vectors in ${\mathbb E^d}$, $j=1,\dots, d-3$, $a\neq b$.
By condition of the lemma, we have $\alpha_{\xi} = $$span \{a_{\xi}, (c_1)_{\xi}$, $\ldots, (c_{d-3})_{\xi} \}$, $\beta_{\xi} = span \{b_{\xi}, (c_1)_{\xi},\ldots, (c_{d-3})_{\xi} \}$,
 $\alpha_{\xi} \cap \beta_{\xi} = span \{(c_1)_{\xi}, \ldots, (c_{d-3})_{\xi} \}$. Here
 $(c_{i})_{\xi} = c_{i} - (c_i \cdot \xi) \xi$, $i=1$, $\dots$, $d-3$, and $a_{\xi} = a - ( a \cdot \xi) \xi$, $b_{\xi} = b - ( b\cdot \xi) \xi$.

Let $n_{\alpha_{\xi}}$ and $n_{\beta_{\xi}}$ be the  normal vectors to $\alpha_{\xi}$ and $\beta_{\xi}$ in $\xi^{\perp}$.
 If $\alpha_{\xi} \perp \beta_{\xi}$, then $n_{\alpha_{\xi}} \perp n_{\beta_{\xi}}$, and, in the three-dimensional case,
 the condition of orthogonality $n_{\alpha_{\xi}} \cdot n_{\beta_{\xi}}=0$ can be written as
 $$
a_{\xi} \cdot b_{\xi} = (a-\xi (a\cdot\xi))\cdot (b - \xi (b\cdot \xi))= a\cdot b-(a\cdot \xi)( b\cdot \xi)= 0.
$$
 To write out the condition  in the case $d\ge 4$, we  consider a linear transformation $A_{\xi} \in O(d)$, such that $A_{\xi} \xi =e_d=(0,\dots,0,1)$. For all $\xi\in S^{d-1}$, $\xi\neq e_d$, we identify $A_{\xi}$ with the corresponding matrix
$$
A_{\xi} = \left( \begin{array}{ccccc}
-1+\frac{\xi_1^2}{1-\xi_d} & \frac{\xi_1\xi_2}{1-\xi_d} & \ldots & \frac{\xi_1 \xi_{d-1}}{1-\xi_d} & -\xi_1\\
-\frac{\xi_1\xi_2}{1-\xi_d} & 1-\frac{\xi_2^2}{1-\xi_d}& \ldots & -\frac{\xi_2 \xi_{d-1}}{1-\xi_d} & \xi_2\\
\ldots&\ldots & \ldots & \ldots & \ldots \\
-\frac{\xi_1\xi_{d-1}}{1-\xi_d} & -\frac{\xi_{d-1} \xi_2}{1-\xi_d}& \ldots & 1-\frac{\xi_{d-1}^2}{1-\xi_d} & \xi_{d-1}\\
\xi_1 & \xi_2 & \ldots & \xi_{d-1} & \xi_d\\
\end{array} \right),
$$
(if we treat the $i$-th row  $r_i$ of $A_{\xi}$  as a $d$-dimensional vector, then $r_i \cdot r_j =\delta_{ij}$, $i,j$$=$$1$,  $\dots$, $d$). If $\xi=e_d$ we put   $A_{\xi} = I$.

Consider now  the vectors $\tilde{a}=\tilde{a}({\xi}) = A_{\xi} a_{\xi}, \tilde{b}=\tilde{b}({\xi}) = A_{\xi} b_{\xi}, \tilde{c_i}=\tilde{c}_i({\xi}) = A_{\xi} (c_i)_{\xi}, \tilde{n}_{\alpha_{\xi}} = A_{\xi} n_{\alpha_{\xi}}, \tilde{n}_{\beta_{\xi}} = A_{\xi} n_{\beta_{\xi}}$. The condition of the normals being orthogonal is
\begin{equation}\label{zero}
 \tilde{n}_{\alpha_{\xi}} \cdot \tilde{n}_{\beta_{\xi}} = n_{\alpha_{\xi}} \cdot n_{\beta_{\xi}}=0 \qquad \xi\in S^{d-1}\setminus (\alpha\cup \beta),
\end{equation}
where the normal vectors $\tilde{n}_{\alpha_{\xi}}, \tilde{n}_{\beta_{\xi}}\in e_d^{\perp}$ can be found as the generalized vector products,
$$
\tilde{n}_{\alpha_{\xi}} = \left| \begin{array}{cccccc}
e_1 & e_2 & e_3 & \ldots & e_{d-1}\\
\tilde{a}^1 & \tilde{a}^2 & \tilde{a}^3 & \ldots & \tilde{a}^{d-1}\\
\tilde{c}^{1}_{1} & \tilde{c}_{1}^{2} & \tilde{c}_{1}^{3} & \ldots & \tilde{c}_{1}^{d-1}\\
\ldots & \ldots  & \ldots  & \ldots & \ldots \\
\tilde{c}_{d-3}^{1}& \tilde{c}_{d-3}^{2} & \tilde{c}_{d-3}^{3} & \ldots & \tilde{c}_{d-3}^{d-1}\end{array} \right|,
$$
$$\tilde{n}_{\beta_{\xi}} = \left| \begin{array}{cccccc}
e_1 & e_2 & e_3 & \ldots & e_{d-1}\\
\tilde{b}^1 & \tilde{b}^2 & \tilde{b}^3& \ldots & \tilde{b}^{d-1}\\
\tilde{c}_{1}^{1}& \tilde{c}_{1}^{2} & \tilde{c}_{1}^{3} & \ldots & \tilde{c}_{1}^{d-1}\\
\ldots & \ldots  & \ldots  & \ldots & \ldots \\
\tilde{c}_{d-3}^{1}& \tilde{c}_{d-3}^{2}& \tilde{c}_{d-3}^{3} & \ldots & \tilde{c}_{d-3}^{d-1} \end{array} \right|.
$$
Here $\tilde{a}^j, \tilde{b}^{j}$ stand for the $j$-th coordinate of the vectors $\tilde{a}$ and $\tilde{b}$,  $\tilde{c}_{i}^{j}$ is the $j$-th coordinate of the vector $\tilde{c}_i$,
and $e_j$, $j=1$, $\dots$, $d-1$, are the vectors of the standard basis in $e_d^{\perp}$.

Notice that the $j$-th coordinate of the normal $\tilde{n}_{\alpha_{\xi}}$ is a rational function $R_j^{\alpha}(\xi)= \frac{P_j^{\alpha}(\xi)}{(1-\xi_d)^{d-2}}$ of  the coordinates of $\xi$,  where  $P^{\alpha}_j(\xi)$ is a polynomial in coordinates of $\xi$. Similarly for $\tilde{n}_{\beta_{\xi}}$, i.e.
$$
\tilde{n}_{\alpha_{\xi}} = (R^{\alpha}_1(\xi),\ldots,R^{\alpha}_{d-1}(\xi)), \quad \tilde{n}_{\beta_{\xi}} = (R^{\beta}_1(\xi),\ldots,R^{\beta}_{d-1}(\xi)).
$$
Since we can choose the standard basis in ${\mathbb E^d}$ in such a way that  $e_d\in n_{\alpha}^{\perp}\cup n_{\beta}^{\perp}$, we can assume that  $\xi_d\neq 1$.

Multiplying each vector $n_{\alpha_{\xi}}$, $n_{\beta_{\xi}}$ by $(1-\xi_d)^{d-2}$, we see that our condition  on directions $\xi\in S^{d-1}\setminus (\alpha\cup\beta)$ for which  $n_{\alpha_{\xi}}\cdot n_{\beta_{\xi}}=0$  can be re-written as
$$
f(\xi):=\sum_{l=1}^{d-1} P_l^{\alpha}(\xi) P_l^{\beta}(\xi) = 0.
$$
In other words, the desired set is  $D_f=Z_f\cap(S^{d-1}\setminus (\alpha\cup \beta))$, where $Z_f$ is the set of zeros of $f$ on $S^{d-1}$.
By geometric considerations, $f$ is not identically zero (it is enough to look at the directions $\xi$ that are close to the ones that are parallel to $span$ $(\alpha, \beta)$).
If the closure of $D_f$ has a non-empty interior then $Z_f$ has a non-empty interior, i.e., there exists a spherical cap ${\mathcal B}_{\epsilon}(w)\subset Z_f$ of radius $\epsilon>0$ centered at $w\in S^{d-1}$. But this is impossible, for
replacing $\xi_d$ with $\sqrt{1-\xi_1^2-\dots-\xi_{d-1}^2}$ in the analytic expression for $f$, we obtain
 an analytic
function of variables $\xi_1,\dots,\xi_{d-1}$ in an open disc ${\mathcal B}_{\epsilon}(w)|w^{\perp}$ centered at the origin. This contradicts the fact that the set of zeros of an analytic function of several real variables is of Lebesgue measure zero (cf. \cite{O}).

Thus, the closure of $D_f$ has an empty interior, which means that $D_f$ is nowhere dense. The Lemma is proved.
\end{proof}

The following elementary result  will be crucial in the Proof of Theorem \ref{th2}.

\begin{lemma}\label{lines}
Let $U$ be an open subset of $S^{d-1}$ and let $\{l_i\}_{i=1}^4$ be four lines  in $\mathbb{E}^d, d \geq 3$, not passing through the origin, such that for any $\xi \in U$ the subspace $\xi^{\perp}$ intersects each line  at a single point $v_i(\xi)$. Assume also that $|v_1(\xi)v_2(\xi)| = |v_3(\xi)v_4(\xi)|$ for any $\xi \in U$.

1) If $l_1$ and $l_2$ are parallel, then all four lines are parallel. In addition, there exists a translation $b$, such that $l_3 = l_1 + b$ and $l_4 =l_2 + b$ or $l_3 = l_2 + b$ and $l_4 = l_1+b$ (the last relation can be re-written as $l_3 = -l_1+c, l_4 = -l_2+c$ for $c \in \mathbb{E}^d$; here $-l$ is the reflection of the  line $l$ in   the origin, i.e., $-l=\{-x:\,x\in l\}$).

2) If $l_1$ and $l_2$ are not parallel, then one of the following holds: $l_1 =\pm l_3, l_2 = \pm l_4$ or $l_1 =\pm  l_4, l_2 =\pm l_3$.

3) If $l_1$ and $l_2$ are not parallel and $\textrm{dim}(\textrm{span}(l_1\cup  l_2))=3$, then one of the following holds: $l_1 = l_3, l_2 = l_4$, or $l_1 = -l_3, l_2 = -l_4$, or $l_1 = l_4, l_2 = l_3$, or $l_1 = -l_4, l_2 = -l_3$.

\end{lemma}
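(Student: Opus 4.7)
The plan is to parametrize each line as $l_i = p_i + \mathbb{R}a_i$ with $a_i\in S^{d-1}$ and $p_i$ the foot of the perpendicular from the origin to $l_i$, so $p_i\perp a_i$ and $p_i\neq 0$. A direct substitution gives
$$
v_i(\xi) = p_i - \frac{p_i\cdot\xi}{a_i\cdot\xi}\,a_i,\qquad |v_i(\xi)|^2 = |p_i|^2 + \frac{(p_i\cdot\xi)^2}{(a_i\cdot\xi)^2},
$$
and a similar rational expression for $v_i\cdot v_j$. Both sides of $|v_1v_2|^2=|v_3v_4|^2$ are rational in $\xi$ and homogeneous of degree $0$; clearing denominators produces a polynomial identity which vanishes on the open cone over $U$, hence on all of $\mathbb{E}^d$. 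The remaining steps all consist of matching the behavior of this identity near the hyperplanes $a_i^\perp$, the only possible pole loci of the original rational identity. I assume throughout that the lines under comparison are distinct, the degenerate cases being trivial.

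For Part 1, after choosing orientations take $a_1=a_2=a$ and set $q=p_1-p_2\perp a$, so that $|v_1v_2|^2=|q|^2+(q\cdot\xi)^2/(a\cdot\xi)^2$ has its only poles on $a^\perp$. If some $a_j$, $j\in\{3,4\}$, were not parallel to $a$, the right-hand side would blow up on a hypersphere outside $a^\perp$, which is impossible. Hence all four lines are parallel, and writing $q'=p_3-p_4\perp a$ the equality reduces to $(|q|^2-|q'|^2)(a\cdot\xi)^2=(q'\cdot\xi)^2-(q\cdot\xi)^2$. Evaluating at $\xi=a$ gives $|q|=|q'|$; then $(q\cdot\xi)^2=(q'\cdot\xi)^2$ forces $q'=\pm q$, and the two signs correspond to the two options for the translation $b$.

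For Part 2, $a_1$ and $a_2$ are linearly independent, so the LHS pole locus $a_1^\perp\cup a_2^\perp$ consists of two distinct great hyperspheres. Matching them to the RHS pole locus forces $\{a_3^\perp,a_4^\perp\}=\{a_1^\perp,a_2^\perp\}$, so $(a_3,a_4)$ is a signed permutation of $(a_1,a_2)$. Multiplying the polynomial identity by $(a_1\cdot\xi)^2$ and letting $\xi\to\xi_0\in a_1^\perp$ isolates the principal part $(p_1\cdot\xi_0)^2=(p_{\sigma(1)}\cdot\xi_0)^2$ for all $\xi_0\in a_1^\perp$, where $l_{\sigma(1)}$ is the line on the right parallel to $l_1$. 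Factoring this quadratic and combining with $p_1,p_{\sigma(1)}\perp a_1$ yields $p_{\sigma(1)}=\pm p_1$, hence $l_{\sigma(1)}=\pm l_1$; the analogous step at $a_2^\perp$ gives $l_{\sigma(2)}=\pm l_2$, which is the Part 2 dichotomy.

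For Part 3, I apply Part 2 (and swap $l_3\leftrightarrow l_4$ if necessary) to write $l_3=\epsilon_1 l_1$ and $l_4=\epsilon_2 l_2$ with $\epsilon_i\in\{\pm 1\}$. Since $\xi^\perp$ is invariant under $x\mapsto -x$, we have $v_3=\epsilon_1 v_1$ and $v_4=\epsilon_2 v_2$, and expanding both squared distances collapses the hypothesis to $(1-\epsilon_1\epsilon_2)\,v_1(\xi)\cdot v_2(\xi)=0$. The main obstacle—and the only place where the $3$-dimensional span hypothesis enters—is ruling out $\epsilon_1\epsilon_2=-1$. In that case $v_1\cdot v_2\equiv 0$, and the residue of this identity at $a_1^\perp$ yields $(a_1\cdot p_2)(a_2\cdot\xi_0)=(p_2\cdot\xi_0)(a_1\cdot a_2)$ for all $\xi_0\in a_1^\perp$. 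When $a_1\cdot a_2\neq 0$ this forces $p_2\in\mathrm{span}(a_1,a_2)$, and symmetrically $p_1\in\mathrm{span}(a_1,a_2)$, so $\mathrm{span}(l_1\cup l_2)$ is $2$-dimensional, a contradiction. The delicate case $a_1\cdot a_2=0$ requires additionally extracting the bounded part: the residue first gives $a_1\cdot p_2=0$, whereupon $v_1\cdot v_2\equiv 0$ collapses to $p_1\cdot v_2\equiv 0$, forcing $p_1\cdot a_2=0$ and $p_1\cdot p_2=0$. Together with the standing relations $p_i\perp a_i$, the four nonzero vectors $p_1,p_2,a_1,a_2$ are then pairwise orthogonal, spanning a $4$-dimensional subspace and again contradicting the hypothesis. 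Hence $\epsilon_1=\epsilon_2$, yielding the four alternatives of Part 3.
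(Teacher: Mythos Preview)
Your proof is correct and follows essentially the same strategy as the paper: parametrize each line by its direction $a_i$ and foot of perpendicular $p_i$ (the paper's $b_i$), convert the hypothesis into a rational identity in $\xi$, extend to a polynomial identity on all of $\mathbb{E}^d$, and then read off constraints by analyzing what happens on the hyperplanes $a_i^\perp$. Your pole/residue language is exactly the paper's polynomial-divisibility language in a different dialect.

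The one place where your argument genuinely improves on the paper is Part~3. After reducing to $v_1(\xi)\cdot v_2(\xi)\equiv 0$, the paper restricts to $d=3$, assumes (after a case choice) $b_1\cdot b_2\neq 0$, derives the relations analogous to your residue identities, checks that the auxiliary constants $\lambda,\mu$ are nonzero, deduces $\mathrm{span}(a_1,b_1)=\mathrm{span}(a_2,b_2)$, and then runs a further argument with the vectors $A(\xi),B(\xi)$ to reach a contradiction. Your route is shorter and works uniformly in $d$: splitting on whether $a_1\cdot a_2$ vanishes, you either place $p_1,p_2\in\mathrm{span}(a_1,a_2)$ directly (forcing $\dim\mathrm{span}(l_1\cup l_2)\le 2$), or force $a_1,a_2,p_1,p_2$ to be pairwise orthogonal (forcing the span to be $4$-dimensional). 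Either way the hypothesis $\dim\mathrm{span}(l_1\cup l_2)=3$ is violated immediately, without the paper's final $A(\xi),B(\xi)$ step.
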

\begin{proof}
We start the proof with some elementary algebraic observations.

Parameterize each line $l_i(t) = b_i + ta_i$, such that $t \in \mathbb{R}$, $|a_i|=1$ and $b_i \cdot a_i =0$. Notice, that the choice of the directional vectors $a_i$ is determined up to a sign, and the value of the parameter $t$ for the points of intersection can be found from the condition $\xi \cdot(b_i + t a_i)=0$, i.e. $v_i(\xi)= b_i - \frac{\xi \cdot b_i}{\xi \cdot a_i} a_i$. Hence, the condition of the lemma can be re-written as
\begin{equation}\label{main}
\left|b_1 - \frac{\xi \cdot b_1}{\xi \cdot a_1} a_1-b_2 + \frac{\xi \cdot b_2}{\xi \cdot a_2} a_2\right|=\left|b_3 - \frac{\xi \cdot b_3}{\xi \cdot a_3} a_3-b_4 + \frac{\xi \cdot b_4}{\xi \cdot a_4} a_4\right|, \quad \forall \xi \in U.
\end{equation}
In other words,
\begin{equation}\label{fraction}
\frac{P_1(\xi)}{Q_1(\xi)}= \frac{P_2(\xi)}{Q_2(\xi)}, \quad \forall \xi \in U,
\end{equation}
where
$$
P_1(\xi) = |(\xi \cdot a_1) (\xi \cdot a_2)(b_1-b_2) - (\xi \cdot b_1) (\xi \cdot a_2) a_1 + (\xi \cdot a_1) (\xi \cdot b_2) a_2|^2,
$$
$$
P_2(\xi) = |(\xi \cdot a_3) (\xi \cdot a_4)(b_3-b_4) - (\xi \cdot b_3) (\xi \cdot a_4) a_3 + (\xi \cdot a_3) (\xi \cdot b_4) a_4|^2,
$$
$$
Q_1(\xi) = |(\xi \cdot a_1) (\xi \cdot a_2)|^2,\qquad Q_2(\xi) = |(\xi \cdot a_3) (\xi \cdot a_4)|^2.
$$
By the direct computations, we have
$$
P_1(\xi) = (\xi \cdot a_1)^2(\xi \cdot a_2)^2|b_1-b_2|^2+(\xi \cdot b_1)^2(\xi \cdot a_2)^2+(\xi \cdot a_1)^2(\xi \cdot b_2)^2+
$$
$$+2(\xi \cdot b_1)(\xi \cdot a_2)^2(\xi \cdot a_1) (a_1 \cdot b_2)-2(\xi \cdot a_1)(\xi \cdot b_2)(\xi \cdot a_2)(\xi \cdot b_1) (a_1 \cdot a_2)+
$$
$$
+2(\xi \cdot a_1)^2(\xi \cdot a_2) (\xi \cdot b_2) (b_1 \cdot a_2).
$$

To proceed, we show at first that the left-hand side of (\ref{fraction}) is reducible if and only if $l_1 \parallel l_2$.

Indeed, if $l_1 \parallel l_2$, then $a_1 = \pm a_2$, and the fractions are reducible.
 Using the fact that $a_1 \cdot b_2 = a_2 \cdot b_1 = 0$, we can re-write the left-hand side of (\ref{fraction}) as:
\begin{equation}\label{fraction2}
\frac{P_1(\xi)}{Q_1(\xi)} = \frac{(\xi \cdot a_1)^2|b_1-b_2|^2+\left(\xi \cdot(b_1-b_2)\right)^2}{(\xi \cdot a_1)^2}.
\end{equation}
The fraction in (\ref{fraction2}) is not reducible, for, otherwise, $a_1 = \pm (b_1-b_2)$, which is not possible due to $(b_1-b_2) \perp a_1$.

Now assume that the left-hand side of the equation (\ref{fraction}) is reducible by, say, $(\xi \cdot a_1)$. Then the second term of $P_1(\xi)$ is reducible by $(\xi \cdot a_1)$. Since $b_1 \perp a_1$, we obtain that $a_2 = \pm a_1$. Similarly, if the left-hand side of (\ref{fraction}) is reducible by $(\xi \cdot a_2)$, then the third term is reducible by $(\xi \cdot a_2)$, which gives $a_2=\pm a_1$ and $l_1 \parallel l_2$.

If the left-hand side is of (\ref{fraction}) is reducible, i.e. looks as (\ref{fraction2}), then its right-hand side must be reducible as well. To see this,  compare all directions $\xi \in S^{d-1}$ for which either of the fractions is not defined. In particular, for all $\xi \in S^{d-1}$ such that $\xi \cdot a_1=0$, we must have have $\xi \cdot a_3 = 0$ or $\xi \cdot a_4 =0$.
Without loss of generality, assume that $a_1 = \pm a_3$. Then,
$$
 (\xi \cdot a_1)^2|b_1-b_2|^2+\left(\xi \cdot(b_1-b_2)\right)^2 = \frac{P_2(\xi)}{(\xi \cdot a_4)^2}.
$$
Since the polynomial on the left-hand side of the previous equality is defined for any $\xi \in S^{d-1}$, the right-hand side must be reducible.

Thus, we obtain that $a_1=\pm a_2$ implies $a_3=\pm a_4$, i.e. $l_1 \parallel l_2$ implies $l_3 \parallel l_4$.

If both of the fractions are reducible  (i.e. $a_1=\pm a_2$, $a_3=\pm a_4$), the equality of denominators implies $a_1=\pm a_2 = \pm a_3 = \pm a_4$. Using the equality of the numerators, we have
\begin{equation}\label{algebra}
(\xi \cdot a_1)^4(|b_1-b_2|^2 -|b_3-b_4|^2)+ ((\xi \cdot (b_1-b_2))^2-(\xi \cdot (b_3-b_4))^2) (\xi \cdot a_1)^2=0.
\end{equation}

By taking $\xi = a_1$ in (\ref{algebra}), we obtain $|b_1-b_2|=  |b_3-b_4|$.
We see now that (\ref{algebra}) has reduced to
$$
((\xi \cdot (b_1-b_2))^2-(\xi \cdot (b_3-b_4))^2) (\xi \cdot a_1)^2=0,
$$
or to
$$
(\xi \cdot (b_1-b_2-(b_3-b_4)))(\xi \cdot (b_1-b_2+b_3-b_4))=0\qquad\forall\xi\in{\mathbb E^d}.
$$

This implies
$$
b_1-b_2 = b_3-b_4 \quad \textrm{or} \quad b_1-b_2 = b_4-b_3.
$$
The first part of the lemma is proved.

We prove 2).

In this case, as we have already seen, the fractions in equation (\ref{fraction}) are not reducible. We will consider the first four possibilities, other four options are obtained by changing indices $3\leftrightarrow 4$.
Comparing the denominators in (\ref{fraction}), we see that $a_1=\pm a_3$, $a_2=\pm a_4$.
Writing out  the numerators in (\ref{fraction}), we have
$$
(\xi\cdot a_1)^2(\xi\cdot a_2)^2(|b_1-b_2|^2-|b_3-b_4|^2)+(\xi\cdot a_2)^2((\xi\cdot b_1)^2-(\xi\cdot b_3)^2)+
$$
$$
+\,(\xi\cdot a_1)^2((\xi\cdot b_2)^2-(\xi\cdot b_4)^2)-
$$
$$
-\,2(\xi\cdot a_1)(\xi\cdot a_2)^2((a_1\cdot (b_1-b_2))(\xi\cdot b_1)-(a_1\cdot (b_3-b_4))(\xi\cdot b_3))+
$$
$$
+\,2(\xi\cdot a_1)^2(\xi\cdot a_2)((a_2\cdot (b_1-b_2))(\xi\cdot b_2)-(a_2\cdot (b_3-b_4))(\xi\cdot b_4))-
$$
$$
-\,2(\xi\cdot a_1)(\xi\cdot a_2)(a_1\cdot a_2)((\xi\cdot b_1)(\xi\cdot b_2)-(\xi\cdot b_3)(\xi\cdot b_4))=0.
$$
Since the second term must be divisible by $\xi\cdot a_1$ and the third one must be divisible by $\xi\cdot a_2$, we see that
$b_1-b_3=\lambda_1 a_1$ or $b_1+b_3=\mu_1 a_1$ for some $\lambda_1$, $\mu_1\in{\mathbb R}$ (observe that both conditions may not hold simulteneously, for, otherwise, summing them up, we would obtain
that $a_1$ is parallel to $b_1$) and $b_2-b_4=\lambda_2 a_2$ or $b_2+b_4=\mu_2 a_2$ for some $\lambda_2$, $\mu_2\in{\mathbb R}$ (again, both conditions may not hold
simulteneously, otherwise $a_2$ would be  parallel to $b_2$). If $b_1-b_3=\lambda_1 a_1$, then $l_1=l_3$, for, the parametric equation of $l_3$ becomes $l_3(t)=b_1-\lambda_1 a_1+t a_1$, $t\in{\mathbb R}$, which is the same as the one of $l_1(s)=b_1+a_1 s$, $s\in{\mathbb R}$, after taking $t=\lambda_1+s$.
Arguing similarly, we see that
 $l_1=\pm l_3$, $l_2=\pm l_4$. This finishes the proof of 2).

In order to prove 3), it remains   to exclude two cases $l_1= l_3$, $l_2=- l_4$, and $l_1=- l_3$, $l_2= l_4$, provided $\textrm{dim}(\textrm{span}(l_1, l_2))= 3$ (the cases obtained by changing the indices $3\leftrightarrow 4$ are excluded similarly).
We will consider the first case and the exclusion of the second case is similar. To this end, assume that $a_3=a_1$, $b_3=b_1$, and $a_2=a_4$, $b_2=-b_4$.
Then,  condition (\ref{main}) reads as
$$
\left|\left(b_1 - \frac{\xi \cdot b_1}{\xi \cdot a_1} a_1\right)-\left(b_2 - \frac{\xi \cdot b_2}{\xi \cdot a_2} a_2\right)\right|=\left|\left(b_1 - \frac{\xi \cdot b_1}{\xi \cdot a_1} a_1\right)+\left(b_2 - \frac{\xi \cdot b_2}{\xi \cdot a_2} a_2\right)\right|, \, \forall \xi \in U.
$$
It is satisfied only, provided
\begin{equation}\label{oshibka}
\Big(b_1 - \frac{\xi \cdot b_1}{\xi \cdot a_1} a_1\Big)\cdot \Big(b_2 - \frac{\xi \cdot b_2}{\xi \cdot a_2} a_2\Big)=0 \quad \forall \xi \in U.
\end{equation}
(Observe that (\ref{oshibka}) holds in the case $d\ge 4$, $\textrm{dim}(\textrm{span}(l_1, l_2))= 4$,
 and $span(a_1, b_1)\perp span(a_2,b_2)$).

We show that (\ref{oshibka})  leads to a contradiction in the case $\textrm{dim}(\textrm{span}(l_1, l_2))= 3$. We may assume that $d=3$.
We observe that indeed $\textrm{span}(a_1,b_1)=\textrm{span}(a_2,b_2)$. Passing to the common denominator in (\ref{oshibka}) and using analiticity, we have
$$
(b_1\cdot b_2)(\xi\cdot a_1)(\xi\cdot a_2)-(b_1\cdot a_2)(\xi\cdot a_1)(\xi\cdot b_2)-
$$
$$
(a_1\cdot b_2)(\xi\cdot b_1)(\xi\cdot a_2)+(a_1\cdot a_2)(\xi\cdot b_1)(\xi\cdot b_2)=0\qquad\forall\xi\in {\mathbb E^3}.
$$
Since $d=3$ and, by the assumption, $a_i \cdot b_i = 0$one of the values $b_1\cdot b_2$, $b_1\cdot a_2$, $a_1\cdot b_2$, $a_1\cdot a_2$ is not zero. Hence, the left-hand side of the previous equation is not identically zero. Without loss of generality,
$b_1\cdot b_2\neq 0$. Then, the expression
$$
-(a_1\cdot b_2)(\xi\cdot b_1)(\xi\cdot a_2)+(a_1\cdot a_2)(\xi\cdot b_1)(\xi\cdot b_2)
$$
is divisible by $\xi\cdot a_1$, and, as a consequence,
$$
\xi\cdot((a_1\cdot a_2)b_2-(a_1\cdot b_2)a_2)
$$
is divisible by $\xi\cdot a_1$. This gives
\begin{equation}\label{raz}
(a_1\cdot a_2)b_2-(a_1\cdot b_2)a_2=\mu a_1
\end{equation}
for some $\mu\in{\mathbb R}$.

Observe that $\mu \neq 0$. Indeed, if $\mu=0$, we have $a_1 \cdot a_2 =0, a_1 \cdot b_2 =0$, since $a_2 \perp b_2$. Then (\ref{oshibka}) implies
$$
(b_1\cdot b_2)(\xi\cdot a_1)(\xi\cdot a_2)-(b_1\cdot a_2)(\xi\cdot a_1)(\xi\cdot b_2)=0, \quad \forall \xi \in \mathbb{E}^3,
$$
or, equivalently
$$
(b_1\cdot b_2)(\xi\cdot a_2)=(b_1\cdot a_2)(\xi\cdot b_2), \quad \forall \xi \in \mathbb{E}^3.
$$
The last condition is not possible, due to $a_2 \perp b_2$. Thus, $\mu \neq 0$.

Similarly, since
$$
(b_1\cdot b_2)(\xi\cdot a_1)(\xi\cdot a_2)-(a_1\cdot b_2)(\xi\cdot b_1)(\xi\cdot a_2)
$$
is divisible by $\xi\cdot a_2$, the expression
$$
(\xi\cdot b_2)((a_1\cdot a_2)(\xi\cdot b_1)-(b_1\cdot a_2)(\xi\cdot a_1))
$$
is divisible by $\xi\cdot a_2$ as well, and we have
\begin{equation}\label{dva}
(a_1\cdot a_2)b_1-(b_1\cdot a_2) a_1=\lambda a_2,
\end{equation}
for some $\lambda\in{\mathbb R}$. Following an argument which is similar to the one above, it can be shown that $\lambda \neq 0$. Notice that, by (\ref{raz}), $a_1 \in \textrm{span}(b_2,a_2)$; and, by (\ref{dva}), $b_1 \in \textrm{span}(a_1,a_2)$, which implies that $\textrm{span}(a_1,b_1)=\textrm{span}(a_2,b_2)$, due to the fact that $a_1$ is not parallel to $a_2$.

 Finally, denote
$$
A(\xi)=(\xi \cdot a_1)b_1 - (\xi \cdot b_1) a_1,\qquad B(\xi)=(\xi \cdot a_2) b_2 - (\xi \cdot b_2)a_2.
$$
We know that for every $\xi\in {\mathbb E^3}$ these vectors belong to the same two-dimensional subspace of ${\mathbb E^3}$.
Using (\ref{oshibka}) and the analyticity of $A(\xi)$ and $B(\xi)$, we see that   $A(\xi)\cdot\xi=0$ and $B(\xi)\cdot\xi=0$ for all $\xi\in {\mathbb E^3}$, i.e.,
$A(\xi)$ and $B(\xi)$ belong to $\xi^{\perp}$.
Moreover, due to (\ref{oshibka}) they are orthogonal to each other.
 By taking $\xi\in \textrm{span}(a_1,b_1)$, we obtain two vectors that are orthogonal and parallel to each other at the same time. Hence, at least one of them is zero, and we obtain a contradiction.
The lemma is proved.
\end{proof}
\begin{lemma}\label{perpl}
Let $l_p, l_q, l_r$ be three distinct lines in $\mathbb{E}^d$ and $\xi \in S^{d-1}$, such that $\xi$ is not orthogonal to $l_i$ for $i=p,q,r$. Denote $v_i(\xi) = l_i \cap \xi^{\perp}$. Then the set of directions $\xi \in S^{d-1}$, such that non-zero vectors $v_q(\xi) - v_p(\xi)$ and $v_r(\xi) - v_p(\xi)$ are orthogonal, is a nowhere dense subset $Y_{pqr} \subset S^{d-1}$.
\end{lemma}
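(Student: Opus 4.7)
The plan is to adapt the approach of Lemma~\ref{perp}: rewrite the orthogonality condition as a polynomial equation in $\xi$, verify that the polynomial is not identically zero, and then invoke the local real-analyticity of $S^{d-1}$ to conclude that its zero set has empty interior, whence the smaller set $Y_{pqr}$ is nowhere dense.

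First I would parameterize each line as $l_i(t) = b_i + t a_i$ with $|a_i|=1$ and $b_i \cdot a_i = 0$, so that $v_i(\xi) = b_i - \frac{\xi \cdot b_i}{\xi \cdot a_i} a_i$ for $i \in \{p, q, r\}$. Clearing denominators in $(v_q(\xi) - v_p(\xi)) \cdot (v_r(\xi) - v_p(\xi)) = 0$ by multiplication through by the common denominator $(\xi \cdot a_p)^2 (\xi \cdot a_q)(\xi \cdot a_r)$ produces a homogeneous polynomial identity $F(\xi) = 0$ of degree four in the coordinates of $\xi$.

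The crux is to check that $F \not\equiv 0$, which I would do by exhibiting a single direction at which $F$ is nonzero. Using $b_p \cdot a_p = 0$ one computes $(\xi \cdot a_p)^2 |v_p(\xi)|^2 = (\xi \cdot a_p)^2 |b_p|^2 + (\xi \cdot b_p)^2$. For any $\xi^* \in S^{d-1}$ with $\xi^* \cdot a_p = 0$, every term in the expansion of $F$ picks up a factor of $\xi \cdot a_p$ except the contribution coming from $|v_p|^2$; hence $F(\xi^*) = (\xi^* \cdot a_q)(\xi^* \cdot a_r)(\xi^* \cdot b_p)^2$, which is nonzero for generic $\xi^* \in S^{d-1} \cap a_p^{\perp}$, provided $b_p \neq 0$ and $a_q$, $a_r$ are not both parallel to $a_p$. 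The degenerate configurations (some $l_i$ passing through the origin, or all three lines parallel) are each handled by a short direct computation; for instance in the all-parallel case $a_p = a_q = a_r$, one has $(v_q - v_p) \cdot (v_r - v_p) = c_q \cdot c_r + \frac{(\xi \cdot c_q)(\xi \cdot c_r)}{(\xi \cdot a_p)^2}$, with $c_q := b_q - b_p$, $c_r := b_r - b_p$ nonzero since the lines are distinct, which is manifestly not identically constant in $\xi$.

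With $F \not\equiv 0$ in hand, the nowhere-density of $Y_{pqr}$ follows exactly as in Lemma~\ref{perp}: were the closure of $Y_{pqr}$ to contain a spherical cap $\mathcal{B}_{\epsilon}(w)$, then after a rotation bringing $w$ into the upper hemisphere and the substitution $\xi_d = \sqrt{1 - \xi_1^2 - \cdots - \xi_{d-1}^2}$, $F$ would restrict to a real-analytic function on an open disc in $\mathbb{R}^{d-1}$ vanishing on a nonempty open subset, forcing $F \equiv 0$ by \cite{O}, a contradiction. The main obstacle is the case analysis in the non-triviality step; the generic situation is transparent from the limiting behavior as $\xi \cdot a_p \to 0$, but the all-parallel and origin-incident configurations must be inspected separately via the short direct calculations indicated above.
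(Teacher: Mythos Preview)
Your approach is essentially the paper's: parameterize the lines, clear denominators to obtain a polynomial condition, argue nontriviality, and then invoke the real-analytic zero-set argument from Lemma~\ref{perp}. The paper is in fact terser than you are---it dispatches the nontriviality with a single sentence (``By geometrical considerations, $f\not\equiv 0$'') and states the cleared polynomial has degree three after multiplying by $(a_p\cdot\xi)(a_q\cdot\xi)(a_r\cdot\xi)$; your degree-four clearing by $(\xi\cdot a_p)^2(\xi\cdot a_q)(\xi\cdot a_r)$ is the correct one, and your explicit verification at $\xi^*\in a_p^\perp$ and in the all-parallel case supplies detail the paper omits (one small caveat: the intermediate configuration where exactly one of $a_q,a_r$ is parallel to $a_p$ is not quite covered by your stated dichotomy and would need its own line).
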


\begin{proof} Let $l_i(t) = b_i + ta_i$ $ (t \in \mathbb{R}; i=p,q,r)$, then $v_i(\xi) = b_i - \frac{b_i \cdot \xi}{a_i \cdot \xi}a_i$.
Consider a function $f: S^{d-1} \to \mathbb{R}$, defined as following

$$
f(\xi)=\left(b_q  - \frac{b_q \cdot \xi}{a_q \cdot \xi}a_q - b_p + \frac{b_p \cdot \xi}{a_p \cdot \xi}a_p \right) \cdot \left(b_r  - \frac{b_r \cdot \xi}{a_r \cdot \xi}a_r - b_p + \frac{b_p \cdot \xi}{a_p \cdot \xi}a_p \right).
$$
The condition of orthogonality of the given vectors in terms of $\xi$ is $f(\xi) = 0$. By geometrical considerations, $f \not \equiv 0$ on $S^{d-1}$. 
Since $a_i \cdot \xi \neq 0$, then, multiplying both sides by $(a_p \cdot \xi)(a_q \cdot \xi)(a_r \cdot \xi)$, the condition is equivalent to a polynomial equation of third degree in coordinates of $\xi$.

Using a similar argument as in Lemma \ref{perp}, we can show that the set of zeroes $Z_f \subset S^{d-1}$ of $f$ is nowhere dense on $S^{d-1}$.

\end{proof}

We will use the well-known Minkowski theorem (see, for example,  \cite{K}, Theorem 9, p. 282).

\begin{theorem}\label{Minkowski}
Suppose that $K$ and $L$ are polytopes in $\mathbb{E}^d, d \geq 2$, and suppose also that the facet unit normals $n^1,\ldots,n^l$ and corresponding facet areas $c_1,\ldots,c_l$ coincide, then $K$ coincides with $L$ up to a translation.
\end{theorem}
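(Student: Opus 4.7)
The plan is to prove this via the first mixed volume together with Minkowski's first inequality. First, I would translate both polytopes so the origin lies in their interiors and write down the standard decomposition of any polytope into pyramids with apex at the origin and bases equal to the facets. This gives the volume formula
\begin{equation*}
V(K)=\frac{1}{d}\sum_{i=1}^{l}h_K(n^i)\,c_i,\qquad V(L)=\frac{1}{d}\sum_{i=1}^{l}h_L(n^i)\,c_i,
\end{equation*}
where I have already used the hypothesis that the facet normals and facet $(d-1)$-volumes of $K$ and $L$ agree.

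Next I would recall that for a polytope $K$ with facet normals $n^i$ and facet areas $c_i$, the first mixed volume with any convex body $M$ admits the explicit description
\begin{equation*}
V_1(K,M)\;:=\;\lim_{\varepsilon\to 0^+}\frac{V(K+\varepsilon M)-V(K)}{d\,\varepsilon}\;=\;\frac{1}{d}\sum_{i=1}^{l}h_M(n^i)\,c_i.
\end{equation*}
Specializing $M=L$ and, by symmetry, $M=K$ (using the facets of $L$), together with the formulas in the previous paragraph, yields the two identities $V_1(K,L)=V(L)$ and $V_1(L,K)=V(K)$.

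Now I would invoke Minkowski's first inequality $V_1(K,L)^d\ge V(K)^{d-1}V(L)$, a standard consequence of the Brunn--Minkowski inequality, whose equality case occurs precisely when $K$ and $L$ are homothetic. Substituting $V_1(K,L)=V(L)$ gives $V(L)\ge V(K)$, and the symmetric substitution gives $V(K)\ge V(L)$. Therefore equality holds in Minkowski's inequality, so $L=\lambda K+b$ for some $\lambda>0$ and $b\in\mathbb{E}^d$. Comparing corresponding facet areas and using $c_i^L=\lambda^{d-1}c_i^K=c_i^K$ forces $\lambda=1$, and we conclude $L=K+b$.

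The main obstacle is of course Minkowski's first inequality, which is not itself elementary and rests on Brunn--Minkowski. If one insisted on a self-contained proof, I would instead study the real polynomial $t\mapsto V\bigl((1-t)K+tL\bigr)$ on $[0,1]$: its coefficients are mixed volumes, one can compute its derivatives at $t=0$ and $t=1$ directly from the pyramid decomposition, and the concavity of the $d$-th root (equivalently, Brunn--Minkowski) would still have to be used to conclude equality of volumes and homotheticity. Everything else in the argument is bookkeeping.
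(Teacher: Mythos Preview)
Your argument is correct and is essentially the classical proof of Minkowski's uniqueness theorem via the first mixed volume and Minkowski's first inequality; the computation $V_1(K,L)=V(L)$, $V_1(L,K)=V(K)$ and the equality case are handled properly, and the final step $\lambda^{d-1}=1$ is fine.

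However, there is nothing to compare against: the paper does not give its own proof of this statement. Theorem~\ref{Minkowski} is quoted as a known result with a reference (\cite{K}, Theorem~9, p.~282) and is used as a black box in Step~3 of the proof of Theorem~\ref{th1}. So your proposal is not an alternative to the paper's argument but rather a (standard and correct) justification of a result the authors chose to cite.
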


Our last  auxiliary result in this section is

\begin{theorem}[see \cite{M}, Theorem 1.3] \label{hedgehog}
Let $2 \leq j \leq d-1$ and let $f$ and $g$ be two continuous real-valued functions on $S^{d-1}$. Assume that  for any $j$-dimensional subspace  $\alpha$ and some vector $a_{\alpha}\in \alpha$, the restrictions of $f$ and $g$ onto $S^{d-1}\cap \alpha$ satisfy $f(-u)+a_{\alpha} \cdot u = g(u)$ $\quad \forall u \in \alpha \cap S^{d-1}$ or
$ f(u) + a_{\alpha} \cdot u = g(u)$ $\quad \forall u \in \alpha \cap S^{d-1}$.
Then there exists $b \in \mathbb{E}^d$ such that   $g(u) = f(u) + b \cdot u$ $\forall u\in S^{d-1}$ or $g(u) = f(-u) + b \cdot u$ $\forall u\in S^{d-1}$.
\end{theorem}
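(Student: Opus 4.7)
The plan is to set $F(u) := g(u) - f(u)$ and $G(u) := g(u) - f(-u)$. The hypothesis becomes: for every $j$-dimensional subspace $\alpha$, at least one of $F|_{\alpha\cap S^{d-1}}$, $G|_{\alpha\cap S^{d-1}}$ is the restriction of a linear form on $\alpha$. The conclusion to be proved becomes: $F$ or $G$ is globally the restriction of a linear form on $\mathbb{E}^d$.

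First I would reduce to $j=2$: for any $2$-subspace $\beta\subset\mathbb{E}^d$, choose any $j$-subspace $\alpha\supset\beta$; the hypothesis on $\alpha$ restricts to the hypothesis on $\beta$ since restrictions of linear forms remain linear. I would then invoke the classical result that a continuous function on $S^{d-1}$ ($d\geq 3$) whose restriction to every great circle is the restriction of a linear form must itself be the restriction of a linear form on $\mathbb{E}^d$ (a consequence of the injectivity of the Funk transform on first-degree spherical harmonics). It therefore suffices to prove the dichotomy: \emph{$F$ is linear on every great circle of $S^{d-1}$, or $G$ is.}

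For the dichotomy, let
\[
\mathcal{A}:=\{\alpha\in\mathcal{G}(d,2): F|_{\alpha\cap S^{d-1}}\text{ is linear}\},\qquad \mathcal{B}:=\{\alpha\in\mathcal{G}(d,2): G|_{\alpha\cap S^{d-1}}\text{ is linear}\};
\]
both are closed in $\mathcal{G}(d,2)$, with $\mathcal{A}\cup\mathcal{B}=\mathcal{G}(d,2)$. Suppose for contradiction that neither equals $\mathcal{G}(d,2)$; then $\mathcal{A}\setminus\mathcal{B}$ and $\mathcal{B}\setminus\mathcal{A}$ are non-empty open sets. Pick $\alpha_0\in\mathcal{A}\setminus\mathcal{B}$ and an open neighborhood $U\subset\mathcal{A}\setminus\mathcal{B}$. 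For each $\alpha\in U$ there is a continuously chosen $a_\alpha\in\alpha$ with $F(u)=a_\alpha\cdot u$ on $\alpha\cap S^{d-1}$. Consistency on the intersections of overlapping circles forces $a_\alpha=P_\alpha(b)$ for a single $b\in\mathbb{E}^d$, so $F(u)=b\cdot u$ on the open set $W=\bigcup_{\alpha\in U}(\alpha\cap S^{d-1})\subset S^{d-1}$; any $\gamma\in\mathcal{A}$ that meets $W$ in an open arc then satisfies $F=b\cdot u$ on the entire great circle $\gamma\cap S^{d-1}$, since a linear function on a great circle is determined by its values on any arc of positive length. The symmetric construction on $\mathcal{B}\setminus\mathcal{A}$ produces $b'\in\mathbb{E}^d$ and an open set $W'\subset S^{d-1}$ with $G=b'\cdot u$ on $W'$.

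The main obstacle is to extend these identities across the common boundary of $\mathcal{A}\setminus\mathcal{B}$ and $\mathcal{B}\setminus\mathcal{A}$ onto the whole sphere. The key algebraic observation is that on any non-empty overlap $W\cap W'$ one has $h(u):=f(u)-f(-u)=(b-b')\cdot u$. Using the covering $\mathcal{A}\cup\mathcal{B}=\mathcal{G}(d,2)$ together with a connectedness argument on $\mathcal{G}(d,2)$, I would upgrade this to the statement that $h$ is linear on every great circle; the classical result then makes $h$ globally linear, so $F$ and $G$ would differ by a linear form, forcing $\mathcal{A}=\mathcal{B}=\mathcal{G}(d,2)$ and contradicting $\mathcal{A}\setminus\mathcal{B}\neq\emptyset$.
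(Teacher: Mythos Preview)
The paper does not actually prove this theorem; it is quoted from \cite{M} (and the special case $a_\alpha=0$ from \cite{R2}) and used as a black box. So there is no ``paper's own proof'' to compare with, and your proposal has to stand on its own.

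Your setup---introducing $F=g-f$, $G(u)=g(u)-f(-u)$, reducing to $j=2$, and splitting $\mathcal G(d,2)$ into the two closed sets $\mathcal A,\mathcal B$---is sound, and the ``classical result'' you invoke (linearity on every great circle implies global linearity) is correct. The difficulties are in the two places where you wave your hands.

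First, the sentence ``consistency on the intersections of overlapping circles forces $a_\alpha=P_\alpha(b)$ for a single $b$'' is a genuine lemma, not an observation. Two circles $\alpha,\beta\in U$ meet only in a pair $\pm w$, and the single equation $a_\alpha\cdot w=a_\beta\cdot w$ says only that $a_\alpha-a_\beta\in\mathrm{span}(n_\alpha,n_\beta)$; extracting one vector $b$ with $a_\alpha=P_\alpha b$ for all $\alpha\in U$ requires an integrability/cocycle argument that you have not supplied. Without it you do not get $F(u)=b\cdot u$ on $W$, and the subsequent propagation step has nothing to propagate.

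Second, and more seriously, the final paragraph is where the real content of the theorem lies, and the proposal does not contain an argument. You assert that from $h(u)=(b-b')\cdot u$ on $W\cap W'$ a ``connectedness argument'' upgrades to ``$h$ is linear on every great circle.'' But on any $\gamma\in\mathcal A\setminus\mathcal B$ you have $F|_\gamma$ linear and $G|_\gamma$ non-linear by hypothesis, hence $h|_\gamma=G|_\gamma-F|_\gamma$ is \emph{not} linear; so you cannot propagate linearity of $h$ onto such circles from a nearby arc, and connectedness of $\mathcal G(d,2)$ alone does not help. In addition, for $d\ge 4$ two generic $2$-planes meet only at the origin, so the open sets $W$ and $W'$ (built from disjoint open pieces of $\mathcal G(d,2)$) need not intersect at all, and your ``key algebraic observation'' may be vacuous. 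What is missing is precisely the mechanism that forces the dichotomy---this is the substance of the result in \cite{M}, and your outline stops just before it.
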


We remark also that if all $a_{\alpha}$ are zero, then $b=0$ (see also Lemma 1 from \cite{R2}, page 3431).

\section{Proof of Theorem \ref{th1}}

 We start with the case $2\le k= d-1$.

\subsection{Main idea}
We will   show that for all directions $\xi\in S^{d-1}$, the  projections of polytopes onto $\xi^{\perp}$ coincide up to a translation and  a reflection in the origin.
This will be achieved in three steps that can be briefly sketched as follows.

{\it Step 1}. Let $Y\subset S^{d-1}$ be a closed set of directions $\xi$ that are parallel to facets of $P$ or $Q$, and let $U_1$ be any connected component of $U=S^{d-1}\setminus Y$.
We will prove that, given any  two vectors $\xi_1$ and $\xi_2$ in  $U_1$, we have $\partial_{\xi_1}P=\partial_{\xi_2}P$ and $\partial_{\xi_1}Q=\partial_{\xi_2}Q$; see Lemma \ref{shadow}.

{\it Step 2}. We will prove that  the edges of the shadow boundaries $\partial_{\xi}P$ and $\partial_{\xi}Q$, $\xi \in U$, are in  a bijective correspondence.
We will apply Lemma \ref{length} to all corresponding edges of $\partial_{\xi}P$ and $\partial_{\xi}Q$ to conclude that they are parallel and have equal length.

{\it Step 3}. We will  show that the corresponding facets of  projections $P_{\xi}$ and $Q_{\xi}=T_{\xi}(P_{\xi})$, $\xi\in U$, are pairwise parallel.
We will apply Minkowski's Theorem  to conclude that  $P_{\xi}$ and $Q_{\xi}$, $\xi\in U$,   coincide up to a translation and a  reflection in the origin. Then, we will use the density argument to conclude that the last statement holds for all directions $\xi\in S^{d-1}$.

Finally, we  will apply Theorem \ref{hedgehog} with $f$ and $g$ being  the support functions of polytopes.

\subsection{Proof}
{\it Step 1}.
Fix any facet $F$ of $P$ or $Q$ with an outer unit normal $\eta$. Any direction $\xi \in S^{d-1}$ which is parallel to $F$  belongs to $ \eta^{\perp}$. Since polytopes have a finite number of facets, the set $Y$ of all such directions $\xi$ is a union of a finite number of great subspheres of $S^{d-1}$.  Denote  $U=S^{d-1} \setminus Y $. Since $Y$ is closed, $U$ is open.

Fix any $\xi \in U$. Any vertex $v_{\xi} \in P_{\xi}$ is a projection of some vertex (not an edge), otherwise $\xi$ is parallel to some facet. The same holds for edges of $P_{\xi}$, i.e. a pre-image of any edge of $P_{\xi}$ is an edge of $P$, otherwise $\xi$ is parallel to some $2$-dimensional face of $P$.

Let $U_1$ be any non-empty  linearly connected component of $U$.  Observe that it  is contained in an open hemisphere $S_{\zeta}^{d-1}$ for some $\zeta\in S^{d-1}$.
Note also that $U_1$ is geodesically convex, i.e., for any two directions $\xi_1$ and $\xi_2$ in $U_1$ there exists an arc $[\xi_1,\xi_2]$ of a great circle of $S^{d-1}$ such that $[\xi_1,\xi_2]\subset U_1$.
Indeed, since the boundary $\partial U_1$ of $U_1$ is a finite union of closed pieces of great subspheres of $S^{d-1}$, $\partial U_1=\bigcup\limits_{k=1}^jS_k$, we see that $U_1 = S^{d-1} \cap {\mathcal H}$, where ${\mathcal H}$ is a finite convex intersection of $j$ half-spaces defined by hyperplanes, passing through the origin and  containing $S_k$. Since ${\mathcal H}$ is convex, the interval $\xi_1\xi_2$ connecting $\xi_1$ and $\xi_2$ belongs to ${\mathcal H}$. The projection of this interval onto $S^{d-1}$ is an arc $[\xi_1,\xi_2]$ of the unit circle, which is contained in $U_1$.

We have

\begin{lemma}\label{shadow}
For any $\xi_1, \xi_2 \in U_1$ the shadow boundaries coincide, $\partial_{\xi_1}P = \partial_{\xi_2}P$.
\end{lemma}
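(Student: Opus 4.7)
The plan is to show that, for $\xi\in U$, the shadow boundary $\partial_{\xi}P$ is determined purely by the tuple of signs $(\mathrm{sign}(n\cdot\xi))_n$ as $n$ runs over the outer unit normals to the facets of $P$, and that this tuple is locally constant on $U$. Applied to the connected component $U_1$, this immediately yields $\partial_{\xi_1}P=\partial_{\xi_2}P$ for all $\xi_1,\xi_2\in U_1$.

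For the first part, I would fix $\xi\in U$ and any point $x$ in the relative interior of a face $G$ of $P$. The tangent cone to $P$ at $x$ equals $T_xP=\{v\in\mathbb{E}^d:n\cdot v\le 0\text{ for every facet }F_n\text{ containing }G\}$, where $n=n_{F_n}$ denotes the outer normal of $F_n$. The line $x+\mathbb{R}\xi$ meets $\mathrm{int}(P)$ if and only if $\xi$ or $-\xi$ lies in the interior of $T_xP$, equivalently if and only if all the signs $\mathrm{sign}(n\cdot\xi)$ for facets $F_n\supseteq G$ coincide. Hence $x\in\partial_{\xi}P$ if and only if these signs are mixed. In particular, relative interiors of facets never contribute to $\partial_{\xi}P$, and $\partial_{\xi}P$ is exactly the union of those closed ridges $R=F_1\cap F_2$ of $P$ for which $(n_1\cdot\xi)(n_2\cdot\xi)<0$. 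This description depends on $\xi$ only through the sign tuple.

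For the second part, fix any facet normal $n$ of $P$. The linear function $\xi\mapsto n\cdot\xi$ is continuous on $S^{d-1}$ and does not vanish on $U$, since $\xi\in U$ means $\xi\notin n^\perp$ for every such $n$. Therefore $\mathrm{sign}(n\cdot\xi)$ is locally constant on $U$, and by connectedness of $U_1$ it is constant on $U_1$. Applying this simultaneously to every facet normal of $P$, the tuple $(\mathrm{sign}(n\cdot\xi))_n$ is the same for all $\xi\in U_1$, so the collection of ridges selected by the sign condition is identical, and consequently $\partial_{\xi_1}P=\partial_{\xi_2}P$.

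The only substantive step is the combinatorial description of $\partial_{\xi}P$ via the sign tuple; this rests on a direct tangent-cone computation together with the genericity hypothesis $\xi\in U$. Once this description is in place, the remainder of the argument is a routine application of continuity and connectedness of $U_1$.
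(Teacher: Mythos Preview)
Your argument is correct and takes a genuinely different route from the paper's. The paper argues by contradiction: it takes a point $x\in\partial_{\xi_1}P\setminus\partial_{\xi_2}P$, uses the geodesic convexity of $U_1$ (established just before the lemma) to connect $\xi_1$ to $\xi_2$ by an arc inside $U_1$, and then runs an intermediate-value argument along that arc to produce a direction parallel to a facet through $x$, contradicting membership in $U_1$. Your proof instead characterizes $\partial_\xi P$ combinatorially as a function of the sign vector $(\mathrm{sign}(n_F\cdot\xi))_F$ and then observes that this vector is locally constant on $U$, hence constant on the connected set $U_1$.

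Your approach is in some ways cleaner: it uses only the connectedness of $U_1$, not the stronger geodesic convexity, and it yields the extra structural information that $\partial_\xi P$ is a union of specific faces of $P$. The paper's approach, on the other hand, avoids the tangent-cone computation entirely. One minor point: your assertion that $\partial_\xi P$ equals the union of the closed ridges $R=F_1\cap F_2$ with $(n_1\cdot\xi)(n_2\cdot\xi)<0$ is true, but the inclusion ``$\subseteq$'' needs a short argument (a face $G$ with mixed signs among its containing facets must lie in such a ridge, which follows from the connectedness of the link of $G$ in $\partial P$). However, you do not actually need this ridge description for the lemma: the previous sentence, that $x\in\partial_\xi P$ iff the signs are mixed for the facets through the face of $x$, already shows that $\partial_\xi P$ depends on $\xi$ only through the sign tuple, and that is all you use downstream.
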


\begin{figure}[h]
\includegraphics[scale=0.5]{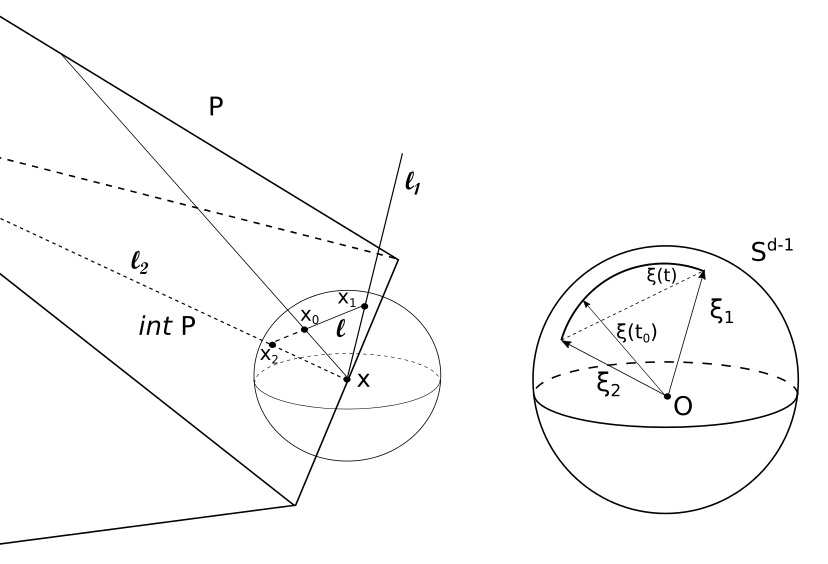}
\caption{Preserving of shadow boundaries.} \label{shad}
\end{figure}

\begin{proof}
Assume the opposite, there exist two distinct vectors $\xi_1, \xi_2 \in U_1$, such that $\partial_{\xi_1} P \neq \partial_{\xi_2} P$. Then there exists $x \in \partial_{\xi_1}P$, but $x \not \in \partial_{\xi_2}P$, and  we may construct two  half-lines $l_1(t_1) = x + t_1 \xi_1, t_1 \geq 0,$ and $l_2(t_2) = x + t_2 \xi_2, t_2 \geq 0$ (or $t_2 \leq 0$, if necessary) to obtain
$$
l_1 \cap int P = \emptyset \quad \textrm{and} \quad l_2 \cap int P \neq \emptyset.
$$
Take a small enough $\varepsilon > 0$, such that the ball $B(x,\varepsilon)$ intersects only faces of $P$ that contain $x$ (see Figure \ref{shad}). Choose $x_2 \in l_2 \cap int P \cap B(x,\varepsilon)$ and $x_1 \in l_1 \cap B(x,\varepsilon), x_1 \neq x$. Notice that $x_1 \not \in \partial P$, otherwise $\frac{x_1-x}{|x_1-x|} \not \in U$, which is not possible since $\xi_1$ is parallel to $ \frac{x_1-x}{|x_1-x|}$.

We project
 the interval $x_1x_2 =\{ tx_1 + (1-t) x_2, t \in [0,1]\}$ onto $S^{d-1}$ and obtain an arc $\xi(t) = \frac{tx_1 + (1-t) x_2 - x}{|tx_1 + (1-t) x_2 - x|}$ of the unit circle $span \{\xi_1,\xi_2\} \cap S^{d-1}$.
On the other hand, there exists a point $x_0$, such that $x_0=t_0x_1+(1-t_0)x_2 \in \alpha$ for some $t_0 \in (0,1)$, where $\alpha \subset \partial P$ is a facet of $P$ containing $x_0$. We have $\xi(t_0) \not \in U_1$, which  contradicts  the fact that  $U_1$ is geodesically convex.
\end{proof}

{\it Step 2}.
Our next goal is to show  that the edges of shadow boundaries $\partial_{\xi}P$ and $\partial_{\xi}Q$, $\xi\in U_1$, are in a  bijective correspondence. Moreover, we will prove that the corresponding edges are parallel and have equal length.

Assume that for a fixed $\xi \in U_1$ the projection $P_{\xi}$ has $k$ vertices $\{(v_1)_{\xi}, (v_2)_{\xi}, \dots,$ $ (v_k)_{\xi}\}$ which  are the projections of $k$ vertices $\{v_1,v_2,\ldots,v_k\}$ of $P$. In each hyperplane $\xi^{\perp}$ we consider a rigid motion $T_{\xi}$, such that $T_{\xi}(P_{\xi}) = Q_{\xi}$ (if there are several such $T_{\xi}$, take any). It is clear that every vertex $(v_i)_{\xi} \in P_{\xi}$ is mapped onto some vertex  $(\tilde{v}_j)_{\xi} =T_{\xi}((v_i)_{\xi})\in Q_{\xi}$ and every  edge of $P_{\xi}$ is mapped onto some edge of $Q_{\xi}$ (notice also  that the pre-image of any vertex of $ Q_{\xi}$ is a vertex of $Q$  and the pre-image of any edge of $Q_{\xi}$ is an edge of $Q$).
This implies that for any  $\xi\in U_1$ we obtain a bijective correspondence $f_{\xi}$ between the set of all vertices  $\{v_1,v_2,...,v_k\} $ of the shadow boundary $\partial_{\xi}P$ and the set of all  vertices $\{\tilde{v}_1,\tilde{v}_2,...,\tilde{v}_k\}$ of the shadow boundary $\partial_{\xi}Q$.

\begin{figure}
  \centering
  \includegraphics[scale=0.5]{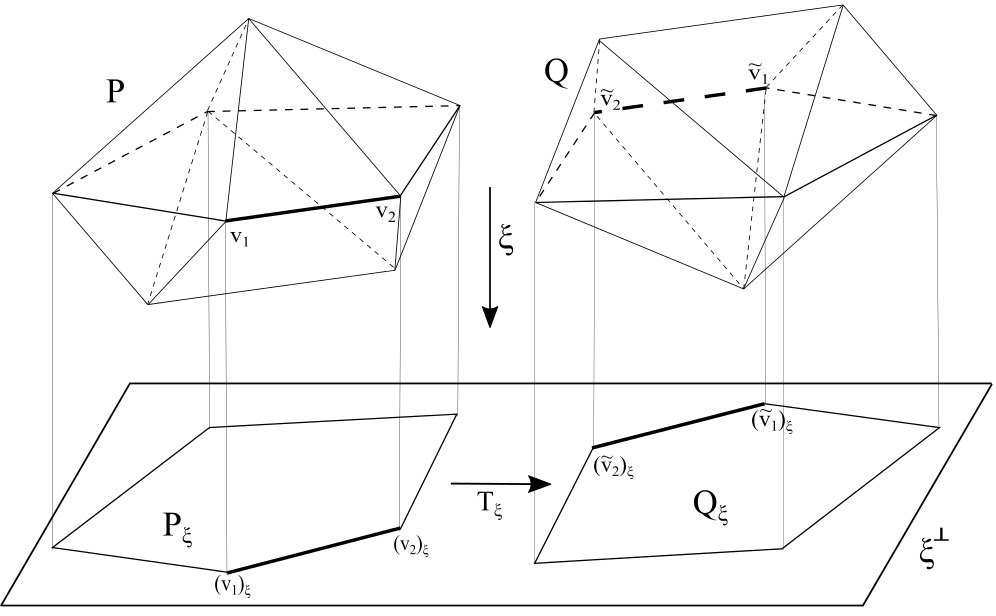}\\
  \caption{Correspondence of vertices through projections}\label{permute}
\end{figure}

Take a closed spherical cap with a  non-empty interior $W \subset U_1$. For any $\xi \in W$ we have at least one $T_{\xi}$ satisfying $T_{\xi}(P_{\xi}) = Q_{\xi}$. Hence, for any $\xi \in W$, we have at least one map $f_{\xi}: \{v_1,v_2,...,v_k\} \to \{\tilde{v}_1,\tilde{v}_2,...,\tilde{v}_k\} $, such that $f_{\xi}(v_i) = \tilde{v}_{\sigma_{\xi}(i)}$,  and $\sigma_{\xi}$ is a permutation of the set $\{1,2,...,k\}$ satisfying $(\tilde{v}_{\sigma_{\xi}(i)})_{\xi}=T_{\xi}((v_i)_{\xi})$ (see Figure \ref{permute}). The set  of all such possible maps $\{f_{\xi}\}_{\xi \in W}$ is finite. We have
$$
W = \bigcup\limits_{\sigma \in {\mathcal P}_k} V_{\sigma},\qquad V_{\sigma} = \{\xi \in W:\, \exists f_{\xi} \,\,\textrm{such that}\,\,f_{\xi}(v_i)=\tilde{v}_{\sigma(i)}\quad\forall i=1,\dots,k\},
$$
where $ {\mathcal P}_k$ is the set of all permutations of $\{1,2,...,k\}$.

Observe that each $V_{\sigma}$ is a closed set (it might be empty). Indeed, let $(\xi_k)_{k=1}^{\infty}$ be a convergent sequence of points of a non-empty $V_{\sigma}$, and let $\lim\limits_{k\to\infty}\xi_k=\xi$.
We have $T_{\xi_k}((v_i)_{\xi_k})=(\tilde{v}_{\sigma(i)})_{\xi_k}$, i.e.,
\begin{equation}\label{close}
T_{\xi_k}(v_i-(v_i\cdot\xi_k)\xi_k)=\tilde{v}_{\sigma(i)}-(\tilde{v}_{\sigma(i)}\cdot\xi_k)\xi_k,
\end{equation}
where $\sigma$ is independent of $\xi_k$.
For each $\xi\in W$ extend  the operator $T_{\xi}$  acting from  $\xi^{\perp}$ to $\xi^{\perp}$ to the operator
${\mathbb T}_{\xi}$ acting from
 ${\mathbb E^d}$ to ${\mathbb E^d}$ as
${\mathbb T}_{\xi}(a)=\Phi_{\xi}(a)+b_{\xi}$. Here,
$\Phi_{\xi}\in O(d)$ is defined as $\Phi_{\xi}|_{\xi^{\perp}}=\varphi_{\xi}$, $\Phi_{\xi}(\xi)=\xi$, where $T_{\xi}(a_{\xi})=\varphi_{\xi}(a_{\xi})+b_{\xi}$. Equation (\ref{close})
in terms of ${\mathbb T}_{\xi_k}$ can be re-written as
$$
{\mathbb T}_{\xi_k}(v_i-(v_i\cdot\xi_k)\xi_k)=\Phi_{\xi_k}(v_i)-(v_i\cdot\xi_k)\xi_k+b_{\xi_k}   =\tilde{v}_{\sigma(i)}-(\tilde{v}_{\sigma(i)}\cdot\xi_k)\xi_k.
$$

Without loss of generality, both polytopes $P$ and $Q$ are located inside a large ball. Hence, the entries $\{a_{ij}^k\}_{k \in \mathbb{N}}, i,j=1,2,...,d$, of the matrix corresponding to transformations $\Phi_{\xi_k}$ and the coordinates $b^k_j, j=1,\ldots,d$ of vector $b_{\xi_k}$ are bounded functions of $\xi_k$. By compactness, we can assume that all $\{a_{ij}^k\}_{k \in \mathbb{N}}$, and $\{b^k\}_{k \in \mathbb{N}}$ are convergent to the corresponding entries of $\tilde{\Phi}_{\xi} = \lim_{k \to \infty} \Phi_{\xi_k}$ and $\tilde{b}_{\xi} = \lim_{k \to \infty}b_{\xi_k}$ respectively. This yields
$$
\tilde{{\mathbb T}}_{\xi}\left((v_i)_{\xi}\right)=\tilde{\Phi}_{\xi}(v_i-(v_i\cdot\xi)\xi)+\tilde{b}_{\xi} =T_{\xi}((v_i)_{\xi})=\tilde{v}_{\sigma(i)}-(\tilde{v}_{\sigma(i)}\cdot\xi)\xi=(\tilde{v}_{\sigma(i)})_{\xi},
$$
where $\tilde{\mathbb{T}}_{\xi} = \lim_{k \to \infty} \mathbb{T}_{\xi_k}$. In other words, there exists $\tilde{T}_{\xi}$, such that the corresponding $\tilde{f}_{\xi}$ satisfies $\tilde{f}_{\xi}(v_i)=\tilde{v}_{\sigma(i)}, \forall i=1,\dots,k$. This means that $\xi\in V_{\sigma}$ and $V_{\sigma}$ is a closed.

By the Baire category Theorem (see, for example,  \cite{R}, pages 42-43) there exists a permutation $\sigma_o$ such that the interior $U_o$ of $V_{\sigma_o}$ is non-empty. (This could be also easily seen as follows.
Enumerate  ${\mathcal P}_k$, and take the first set $V_{\sigma_1}$. If its interior is empty, it is nowhere dense, and for every open spherical cap ${\mathcal B}_1$ in $W$
 there exists a smaller cap ${\mathcal B}_2$ that is free of points of
$V_{\sigma_1}$, ${\mathcal B}_2\cap V_{\sigma_1}=\emptyset$. Repeat the procedure with $V_{\sigma_2}$ and ${\mathcal B}_2$ instead of
$V_{\sigma_1}$ and ${\mathcal B}_1$. After finitely many steps, unless we meet some $V_{\sigma}$ with a non-empty interior, we will optain a spherical cap that  does not intersect $W$, which is impossible).

Observe that if $v_i$ and $v_j$ are connected by an edge $v_iv_j \subset P$, such that $(v_i)_{\xi}(v_j)_{\xi}$ is an edge of $P_{\xi}$, $\xi\in U_o$, then $\tilde{v}_{\sigma(i)}\tilde{v}_{\sigma(j)}$ is an edge of $Q$.
Now, we can apply Lemma \ref{length} to all such pairs of edges $v_iv_j \in \partial_{\xi}P$ and $\tilde{v}_{\sigma(i)}\tilde{v}_{\sigma(j)} \in \partial_{\xi}Q$ for any $\xi \in U_o \subset S^{d-1}$. We see that
these edges are parallel and have equal length.
Thus,
all corresponding edges  belonging to  the shadow boundaries $\partial_{\xi}P$ and $\partial_{\xi}Q$,  $\xi \in U_o$, are parallel and have equal lengths. Applying  Lemma \ref{shadow}, we  conclude that the last statements holds for  $U_1$ instead of $U_o$.

{\it Step 3}.
We will  show that  the projections of both polytopes in the directions of $U_1$ coincide up to a translation and a reflection in the origin.
To do this, we will  use Minkowski's Theorem about uniqueness  (up to a translation) of polytopes with parallel facets having the same volume. Our polytopes will be
$P_{\xi}$ and $Q_{\xi}$, $\xi\in U_1$.

 Fix any $\xi \in U_1$. Then $P_{\xi}$ and $Q_{\xi}$ are two $(d-1)$-dimensional polytopes in $\xi^{\perp}$, such that $T_{\xi} (P_{\xi}) = Q_{\xi}$.  Since the map $T_{\xi}$ is a bijection between the sets of all  facets of $P_{\xi}$ and $Q_{\xi}$,
 for any  facet $\alpha$ of $P_{\xi}$ there exists a unique  facet $\tilde{\alpha}$ of $ Q_{\xi}$ such that $T_{\xi} ( \alpha ) = \tilde{\alpha}$.

We will show at first that $\alpha$ is parallel to $\tilde{\alpha}$.
Consider the $(d-2)$-dimensional affine subspaces $\Pi$, $\tilde{\Pi}$ of $\xi^{\perp}$ such that $\alpha \subset \Pi$ and $\tilde{\alpha} \subset \tilde{\Pi}$.
We claim  that $\tilde{\Pi}$ is parallel to $ \Pi$. In other words,  the outer unit normals $n_{\alpha}$, $n_{\tilde{\alpha}} \in\xi^{\perp}$ of
 $\Pi$ and $\tilde{\Pi}$ satisfy  $n_{\tilde{\alpha}} = \pm n_{\alpha}$.

To prove the claim, observe that
there exist $d-2$ linearly independent non-zero directional vectors $\{(a_1)_{\xi},\ldots, (a_{d-2})_{\xi} \}$ of edges of $\alpha$ having a common vertex, such that $span \{ (a_1)_{\xi},\ldots, (a_{d-2})_{\xi} \}$ is parallel to $\Pi$ (here, a directional vector of an edge is any non-zero vector parallel to the edge). The directional vectors of edges $\{(a_1)_{\xi},\ldots, (a_{d-2})_{\xi} \}$ are the projections of directional vectors of the edges $\{a_1,\ldots,a_{d-2}\} \subset \partial_{\xi}P$. The directional vectors $\{\tilde{a}_1,\ldots, \tilde{a}_{d-2}\} \subset \partial_{\xi}Q$ of the corresponding edges are parallel, i.e. $a_i$ is parallel to $\tilde{a}_i$ for $i=1,\ldots,d-2$. The same holds true for their projections onto $\xi^{\perp}$, $(a_i)_{\xi} $ is parallel to $(\tilde{a}_i)_{\xi}$ for $i=1,\ldots,d-2$. We conclude that for the $(d-2)$-dimensional affine subspace $\tilde{\Pi}$ containing $\tilde{\alpha}$ we have $T_{\xi}(\Pi) = \tilde{\Pi}$ and $\tilde{\Pi}$ is parallel to $ span \{( \tilde{a}_1)_{\xi},\ldots,(\tilde{a}_{d-2})_{\xi} \} = span \{(a_1)_{\xi},\ldots,(a_{d-2})_{\xi}\}$. The claim is proved.

Since $T_{\xi}$ is an isometry, we have $\textrm{vol}_{d-2} (\alpha) = \textrm{vol}_{d-2} (\tilde{\alpha})$. Now assume that both polytopes $P_{\xi}$ and $Q_{\xi}$ have $l$ facets $\{\alpha_i\}_{i=1}^l \subset P_{\xi}$ and $\{\tilde{\alpha}_i\}_{i=1}^l \subset Q_{\xi}$ with corresponding outer normals $\{n_i\}_{i=1}^l$ and $\{\tilde{n}_i\}_{i=1}^l$.
We will show  that up to a nowehere dense subset of directions $\xi$  (defined below)
 the projections $P_{\xi}$ and $Q_{\xi}$ are translates of each other (up to a reflection in the origin). To be able to  apply
 Minkowski's Theorem to $P_{\xi}$ and $Q_{\xi}$  (or  $-P_{\xi}$ and $Q_{\xi}$) we have to show that
$n_i = \tilde{n}_i$ for all $i=1,\ldots,l$, or $-n_i = \tilde{n}_i$ for all $i=1,\ldots,l$.

Assume that the last statement is not true,  and let  $\alpha_i, \alpha_j \subset P_{\xi}$ and $\tilde{\alpha}_i, \tilde{\alpha}_j \subset Q_{\xi}$, be  two pairs of facets such that $\alpha_i \cap \alpha_j \neq \emptyset$, $T_{\xi}(\alpha_i) = \tilde{\alpha}_i$, $T_{\xi}(\alpha_j) = \tilde{\alpha}_j$, but $\tilde{n}_i = n_i, \tilde{n}_j = -n_j$, (since $T_{\xi}$ is an isometry,  $\tilde{\alpha}_i \cap \tilde{\alpha}_j \neq \emptyset$).
Consider  two cases,
 $n_i \cdot n_j \neq 0$ and  $n_i \cdot n_j = 0$. The first case $n_i \cdot n_j \neq 0$ is impossible, for
  $$
  n_i \cdot (-n_j) =\tilde{n}_i \cdot \tilde{n}_j = \varphi_{\xi}(n_i) \cdot \varphi_{\xi} (n_j) = \varphi^T_{\xi} \varphi_{\xi}(n_i) \cdot n_j = n_i \cdot n_j,
  $$
yields $n_i \cdot n_j = 0$.
Here $\varphi^T_{\xi}$ stands for the transpose operator of $\varphi_{\xi}$, and we used  the fact that $\varphi^T_{\xi} \varphi_{\xi}=I$ due to $\varphi_{\xi} \in O(d-1,\xi^{\perp})$.

 To exclude the case $n_i \cdot n_j = 0$, we recall that the pre-image of any facet of $P_{\xi}$ or $Q_{\xi}$ is an $(d-2)$-dimensional face of $P$ or $Q$ respectively. We  apply Lemma \ref{perp} to the pairs of subspaces that are parallel to all pairs of   $(d-2)$-dimensional faces of $P$ and $Q$. We obtain a closed nowhere dense subset  $Y_1\subset U$,  such that  the normals of the corresponding facets of $P_{\xi}$ and $Q_{\xi}$, $\xi\in Y_1$, are orthogonal.
Now we may repeat our considerations for the case $n_i \cdot n_j \neq 0$ on  $U_1\setminus Y_1$ instead of $U_1$. We obtain that for every $\xi\in U_1\setminus Y_1$  only one of the choices
$n_i = \tilde{n}_i$ for all $i=1,\ldots,l$, or $-n_i = \tilde{n}_i$ for all $i=1,\ldots,l$, holds.

We conclude that for all $\xi\in U_1\setminus Y_1$,  we have $n_i = \tilde{n}_i$ for all $i=1,\ldots,l$, or $-n_i = \tilde{n}_i$ for all $i=1,\ldots,l$.
Also, recall that $c_i = \tilde{c}_i$ for all $i=1,\ldots,l$, where $c_i=\textrm{vol}_{d-2}(\alpha_i)$, $\tilde{c}_i=\textrm{vol}_{d-2}(\tilde{\alpha}_i)$;
now we may apply Theorem \ref{Minkowski} to $P_{\xi}$ and $Q_{\xi}$ or to $-P_{\xi}$ and $Q_{\xi}$.

Thus, for every $\xi\in U_1\setminus Y_1$  there exists a vector $b_{\xi} \in \xi^{\perp}$, such that $Q_{\xi} = P_{\xi} + b_{\xi}$ or $Q_{\xi} = -P_{\xi} + b_{\xi}$.
We can repeat the above argument for every connected component of $U$.  We see that for every $\xi\in U\setminus Y_1$ the supporting functions $h_P$ and $h_Q$ satisfy
\begin{equation}\label{plus}
h_Q(u) = h_P(u)+b_{\xi} \cdot u \quad \textrm{for any} \quad u \in \xi^{\perp},
\end{equation}
or
\begin{equation}\label{minus}
h_Q(u) = h_P(-u)+b_{\xi} \cdot u \quad \textrm{for any} \quad u \in \xi^{\perp}.
\end{equation}
Here we used the fact that $h_{P_{\xi}}(u)=h_P(u)$ $\forall u\in\xi^{\perp}$, see (\cite{Ga}, (0.21), p.17).

It is not difficult to see that one of the above equalities (or both) hold for every $\xi\in S^{d-1}$.
Indeed, since $Y\cup Y_1$ is nowhere dense on $S^{d-1}$,
 for any open neighborhood $V_{\xi}\subset S^{d-1}$ of any $\xi \in Y\cup Y_1$ we have $V_{\xi} \cap U \neq \emptyset$. Hence, there exists a sequence $\{\xi_k\}_{k \in \mathbb{N}} \subset U$, such that $\lim_{k \to \infty} \xi_k = \xi$. By taking a convergent subsequence if necessary, we conclude that (the argument is very similar to the one in  the proof of Lemma 6, p.  3435, \cite{R2}) there exists a limit $ \lim\limits_{k \to \infty} b_{\xi_k}=b_{\xi}  \in \xi^{\perp}$ for $\xi \in Y\cup Y_1$, such that (\ref{plus}) or (and)  (\ref{minus}) holds.

To finish the proof in the case $k=d-1$, we apply Theorem \ref{hedgehog} with $j=k$, and $f=h_P$, $g=h_Q$.
We obtain that there exists $b\in {\mathbb E^d}$ such that
$h_Q(u) = h_P(u)+b \cdot u $ for all $ u \in{\mathbb E^d}$,
or
$h_Q(u) = h_P(-u)+b \cdot u$ for all $ u \in {\mathbb E^d}$. Using the well-known properties of the support functions (\cite{Ga}, pp. 16-18) we obtain the desired result in the case
$k=d-1$.

\subsection{Proof of Theorem \ref{th1} in the case $2\leq k < d-1$}
We use   induction on $k$.
Let $H$ be any $(k+1)$-dimensional subspace of ${\mathbb E^d}$. We apply the result for $d=k+1$ to the bodies $P|H$ and $Q|H$ and their projections
$(P|H)|J$ and $(Q|H)|J$ for all $k$-dimensional subspaces $J\subset H$.
We obtain that $P|H= Q|H + b_H$ or $P|H = -Q|H+b_H$ for all $H$. We proceed and the result follows after finitely many steps.
The proof of Theorem \ref{th1} is completed.

\section{Proof of Theorem \ref{th2} }

We start with the case $2\le k= d-1$.

\subsection{Main idea}
We will   show that for all directions $\xi\in S^{d-1}$, the  sections of polytopes by $\xi^{\perp}$ coincide up to a reflection in the origin.
This will be achieved in four steps, which can be briefly described as follows.

{\it Step 1}. Let $Y\subset S^{d-1}$ be a closed set of  directions $\xi$  for which $\xi^{\perp}$ contains any of the vertices of $P$ or $Q$, and let $U_1$ be any connected component of
$U=S^{d-1}\setminus Y$.
We will prove that given any  two directions $\xi_1$ and $\xi_2$ in  $U_1$, the subspaces $\xi_1^{\perp}$ and $\xi_2^{\perp}$ intersect the same set of edges of $P$ (and $Q$);
 see Lemma \ref{edges}.

Denote by  $E=E(U_1)$ and $\tilde{E}=\tilde{E}(U_1)$ the sets of edges of $P$ and $Q$
 that are intersected by $\xi^{\perp}$, $\xi\in U_1$. Denote also by
$L=L(U_1)$ and $\tilde{L}=\tilde{L}(U_1)$
    the sets of lines containing the intersected edges  from $E$ and $\tilde{E}$.

{\it Step 2}.
We will prove that there exists an open non-empty subset $U_o$ of $U_1$ such that
for all  directions $\xi\in U_o$  all rigid motions $T_{\xi}$ ``look alike". More precisely, for all $\xi\in U_o$ and for any $l_i\in L$ the vertices  $v_i(\xi)=\xi^{\perp}\cap l_i$ of sections $P\cap \xi^{\perp}$ are all mapped into the vertices of $Q\cap\xi^{\perp}$ of the form $\tilde{v}_{j(i)}(\xi)=\xi^{\perp}\cap \tilde{l}_{j(i)}$, where the line  $\tilde{l}_{j(i)}\in \tilde{L}$ is fixed and  $j=j(i)$   is independent of $\xi$.

{\it Step 3}.
 Using Lemma \ref{lines} we will show that the corresponding lines  in $L$ and $\tilde{L}$ coincide up to a reflection in the origin.

{\it Step 4}.  We  will apply Theorem \ref{hedgehog}    with $j=d-1$, $a_{\alpha}=0$, $b=0$, and $f$ and $g$ being  the radial functions of polytopes.

\subsection{Proof} {\it Step 1}.
 Consider any vertex $v$ of  $P$ or $Q$. If a  hyperplane $\xi^{\perp}$ contains $v$ then  $\xi \cdot v=0$. Hence, the set of all such directions $\xi$ is a sub-sphere $v^{\perp} \cap S^{d-1}$. Since both polytopes have a finite number of vertices, the union  $Y$ of all such sub-spheres  is closed, hence its complement $U = S^{d-1} \setminus Y$ is open in $S^{d-1}$.

Fix any connected component $U_1$ of $U$.
\begin{lemma}\label{edges}
For any two distinct vectors $\xi_1, \xi_2 \in U_1$, the hyperplanes $\xi_1^{\perp}$ and $\xi_2^{\perp}$ intersect the same set of edges of $P$.
\end{lemma}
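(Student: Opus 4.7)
The plan is to reduce the lemma to the observation that, along a connected component $U_1$ of $U$, the sign of $\xi\cdot v$ is locked for every vertex $v$ of $P$. First I would record the elementary fact that every point on the edge $e=v_1v_2$ of $P$ has the form $tv_1+(1-t)v_2$ with $t\in[0,1]$, so $e\cap\xi^{\perp}\neq\emptyset$ precisely when
\[
t(\xi\cdot v_1)+(1-t)(\xi\cdot v_2)=0
\]
admits a solution $t\in[0,1]$. For $\xi\in U_1\subset U$, the definition of $Y$ forces $\xi\cdot v_1\neq 0$ and $\xi\cdot v_2\neq 0$, so such a $t$ exists in $(0,1)$ if and only if $\xi\cdot v_1$ and $\xi\cdot v_2$ have opposite signs (and no $t\in[0,1]$ solves it when the signs agree).

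Next I would invoke the connectedness of $U_1$. For each vertex $v$ of $P$, the continuous function $\xi\mapsto \xi\cdot v$ never vanishes on $U_1$, so by the intermediate value theorem its sign is constant on $U_1$. Applying this to both endpoints $v_1,v_2$ of any edge $e$, the parity of the pair $(\operatorname{sgn}(\xi\cdot v_1),\operatorname{sgn}(\xi\cdot v_2))$ is the same for every $\xi\in U_1$. By the criterion of the previous paragraph, this means $\xi_1^{\perp}$ meets $e$ if and only if $\xi_2^{\perp}$ meets $e$, which is exactly the claim of Lemma \ref{edges}. Clearly the identical argument applies to edges of $Q$, which is consistent with the subsequent use of the notation $E(U_1)$ and $\tilde{E}(U_1)$.

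There is essentially no obstacle: the only thing to be careful about is that $U_1$ is a connected component, not merely a path‑component, so I would note at the outset that $U$ is an open subset of $S^{d-1}$ (being the complement of a finite union of great subspheres), hence locally path‑connected, and therefore its connected components coincide with its path‑components. With that in hand the sign‑constancy argument goes through immediately, and no geodesic‑convexity of $U_1$ or passage to a smaller neighbourhood is required.
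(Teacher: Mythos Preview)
Your argument is correct and is genuinely different from, and more elementary than, the paper's proof. The paper argues by contradiction: it fixes an edge $xy$ met by $\xi_1^{\perp}$ but not by $\xi_2^{\perp}$, parameterises the geodesic arc $[\xi_1\xi_2]\subset U_1$ (using the geodesic convexity of $U_1$ established earlier), tracks the moving intersection point $r(t)=l\cap\xi(t)^{\perp}$ along the line $l\supset xy$, and shows that at some time $t_0$ one has $r(t_0)\in\{x,y\}$, forcing $\xi(t_0)\in Y$. Your route bypasses all of this by reducing the incidence criterion to the parity of $(\operatorname{sgn}(\xi\cdot v_1),\operatorname{sgn}(\xi\cdot v_2))$ and then invoking only the connectedness of $U_1$ to lock each $\operatorname{sgn}(\xi\cdot v)$. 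This avoids the geodesic-convexity step and the case analysis on whether $\xi(t)\cdot a$ vanishes; in exchange, the paper's argument is more geometric and self-contained in that it never isolates the sign criterion. Two minor remarks: the sign-constancy of a continuous nowhere-vanishing function on a connected set follows already from connectedness (the image lies in one component of $\mathbb{R}\setminus\{0\}$), so your appeal to path-connectedness, while harmless, is not strictly needed; and your parenthetical about $Q$ is apt, since the later use of $\tilde E(U_1)$ relies on the identical statement for $Q$.
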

\begin{figure}[h]
\includegraphics[scale=0.4]{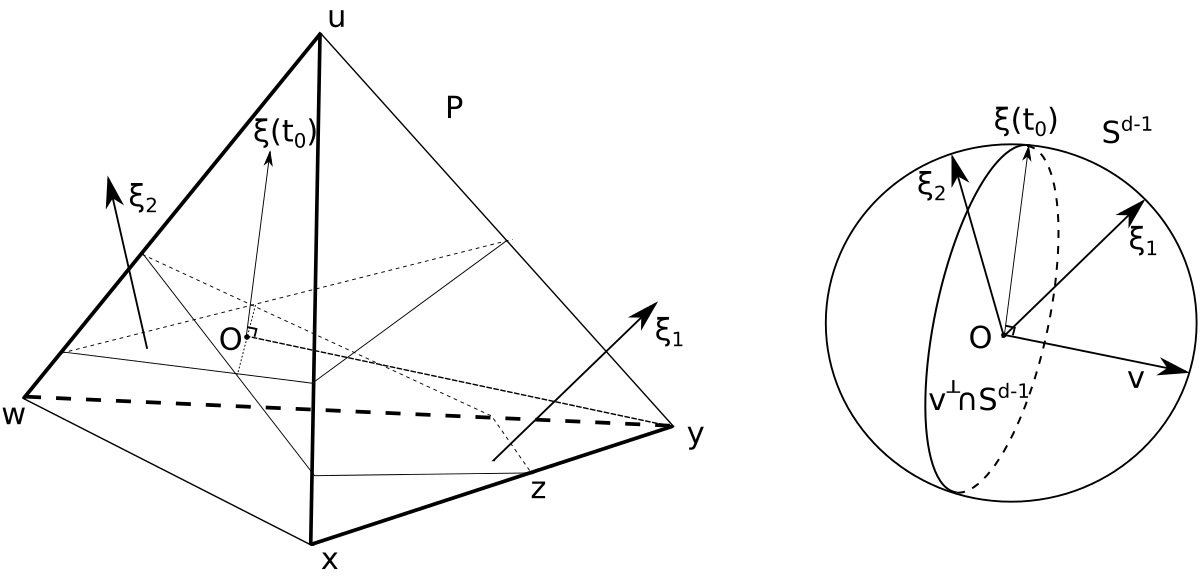}
\caption{The sets $A_1 = \{ xy,yw,wu,ux\}$, $ A_2 = \{ uw,ux,uy\}$.}\label{geoconv}
\end{figure}
\begin{proof}
Let $A_i$ be the set of all interiors of all edges of $P$ that have non-empty intersections with $\xi_i^{\perp}$, $i=1,2$. We claim that $A_1 = A_2$.

Assume that $A_1 \neq A_2$ (see Figure \ref{geoconv}). Then, there exists an edge $xy$ such that
$xy \cap \xi^{\perp}_2 = \emptyset$, $ xy \cap \xi_1^{\perp} = z$,
for some point $z \in xy$. Assume also that   $xy$ lies on a line  $l$,  $l(s) = b + s a$, $a,b \in \mathbb{E}^d$, $ s \in \mathbb{R}$. To prove the claim consider  a continuous path $\xi(t)$ along the shortest  arc $[\xi_1\xi_2]$$\subset U_1$, such that $\xi_1=\xi(0)$, $\xi_2 = \xi(1)$ (since $U_1$ is geodesically convex, $[\xi_1\xi_2]\in U_1$).

For any $t \in (0,1)$, the intersection of $\xi(t)^{\perp}$ with $l$, if exists, can be found as $r(t) = b - \frac{\xi(t) \cdot b}{\xi(t) \cdot a} a$. In particular,  $z = r(0)=b - \frac{\xi(0) \cdot b}{\xi(0) \cdot a}  a\in xy$ and $r(1) =b - \frac{\xi(1) \cdot b}{\xi(1) \cdot a}a  \not \in xy$.

Assume that there exists $t_1 \in (0,1]$ such that
$\xi(t_1)^{\perp}$ be parallel to $l$, i.e. $\xi(t_1)\cdot a=0$ (note  that there is at most one such value $t_1$, since $[\xi_1\xi_2]$ and $a^{\perp}$ intersect at most at one point).
Take $\xi(t_1-\delta)$ and consider the point  $r(t_1-\delta)=b - \frac{\xi(t_1-\delta) \cdot b}{\xi(t_1-\delta) \cdot a}  a \not \in xy$ for some $\delta>0$ small enough (if  $\xi(t_1-\delta)$ is not orthogonal to $a$, then line $l$ is not parallel to hyperplane $\xi(t_1-\delta)^{\perp}$, i.e. $l \cap \xi(t_1-\delta)^{\perp} \neq \emptyset$). Since $r(t)$ is continuous on $[0,t_1-\delta]$, there exists $t_0 \in (0,t_1-\delta)$, such that $r(t_0) = x$ (or $y$). This gives $\xi(t_0) \in Y$, which leads to a contradiction.

If $\forall t\in (0,1]$, $\xi(t) \cdot a \neq 0$,  repeat the previous argument with $\delta=0$.
\end{proof}

{\it Step 2}. For any $\xi \in U_1$ the vertices $v_i$ of $P\cap \xi^{\perp}$ are the intersections  of $\xi^{\perp}$ with   lines $l_i\in L$ and  the vertices $\tilde{v}_i $ of $Q\cap \xi^{\perp}$
are the intersections of $\xi^{\perp}$ with
  lines $\tilde{l}_i\in\tilde{L}$. Since
a rigid motion $T_{\xi}$ maps the vertices  of $P\cap \xi^{\perp}$ onto the vertices  of $Q\cap \xi^{\perp}$, there exists  a one to one correspondence between  these sets of vertices. Hence, there also  exists a one to one correspondence between the
lines in $L$ and $\tilde{L}$ (see Figure \ref{sect}).

\begin{figure}[h]
  \centering
  \includegraphics[scale=0.4]{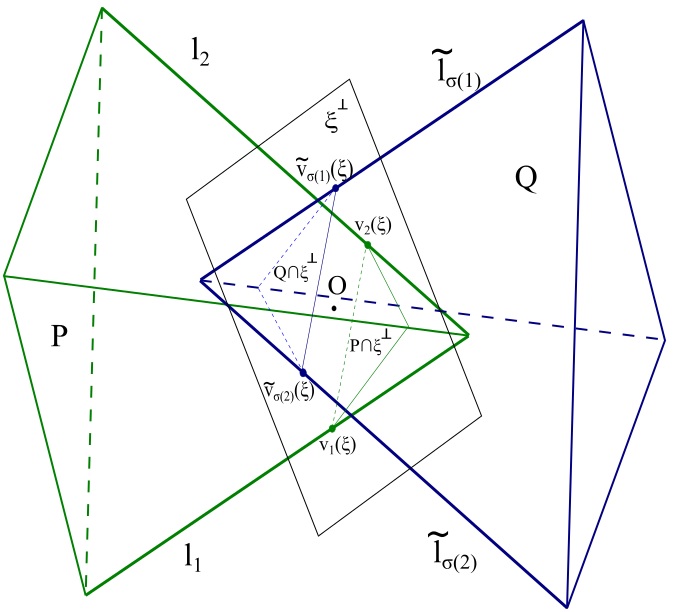}\\
  \caption{Correspondence of lines containing edges through the sections. }\label{sect}
\end{figure}

Let
$L=$$\{l_i\}_{i=1}^k$ and $\tilde{L}=$$\{\tilde{l}_i\}_{i=1}^k$.
Then for every $\xi\in U_1$, there exists at least one  rigid motion $T_{\xi}$ that maps any vertex $v_i(\xi)=\xi^{\perp}\cap l_i$ of $P\cap\xi^{\perp}$  into the vertex $\tilde{v}_j(\xi)=\xi^{\perp}\cap \tilde{l}_{j}$ of $Q\cap\xi^{\perp}$,
$j=j(i,\xi)$.
Hence, there is a permutation $\sigma_{\xi}\in{\mathcal P}_k$ of the set $\{1,\ldots,k\}$ and the map $f_{\xi}: L\to \tilde{L}$ such that $f_{\xi}(l_i)=\tilde{l}_{\sigma_{\xi}(i)}$,  $\sigma_{\xi}(i)=j(i,\xi)$.

We claim that   there exists an open non-empty subset $U_o$ of $U_1$ and a fixed permutation $\sigma=\sigma(U_o)\in{\mathcal P}_k$ such that
\begin{equation}\label{perm}
f_{\xi} (l_i) = \tilde{l}_{\sigma(i)}\qquad \forall\xi \in U_o,\qquad\forall i=1,\ldots,k.
\end{equation}

Take a closed spherical cap with a  non-empty interior $W \subset U_1$.  The set  of all  possible maps $\{f_{\xi}\}_{\{\xi\in W\}}$ is finite. Hence,
$$
W = \bigcup\limits_{\sigma \in {\mathcal P}_k} V_{\sigma},\qquad V_{\sigma} = \{\xi \in W:\,\exists f_{\xi} \,\,\textrm{such that}\,\,f_{\xi}(l_i)=\tilde{l}_{\sigma(i)}\quad\forall i=1,\dots,k\}.
$$

Observe each $V_{\sigma}$ is a closed set (it might be empty). Indeed, let $(\xi_k)_{k=1}^{\infty}$ be a convergent sequence of points of a non-empty $V_{\sigma}$, and let $\lim\limits_{k\to\infty}\xi_k=\xi$.
Then $f_{\xi_k}(l_i)=\tilde{l}_{\sigma(i)}$. In other words,  for the corresponding vertices $v_i(\xi_k)$ and $ \tilde{v}_{\sigma(i)}(\xi_k)$ we have
\begin{equation}\label{vertline}
T_{\xi_k}(v_i(\xi_k))=\tilde{v}_{\sigma(i)}(\xi_k),\qquad v_i(\xi_k)=l_i\cap\xi^{\perp}_k,\quad \tilde{v}_{\sigma(i)}(\xi_k)=\tilde{l}_{\sigma(i)}\cap\xi^{\perp}_k,
\end{equation}
where $\sigma$ is independent of $\xi_k$. Arguing as in the case of projections (see {\it Step 2)} in the proof of Theorem \ref{th1}), we extend the operator $T_{\xi}:\,$$\xi^{\perp}\to\xi^{\perp}$, $\xi\in W$,  to the one acting on the whole space ${\mathbb T}_{\xi}:\,$${\mathbb E^d}\to {\mathbb E^d}$, $\xi\in W$, by the formula
${\mathbb T}_{\xi}(a)=\Phi_{\xi}(a)+b_{\xi}$. Here,
$\Phi_{\xi}\in O(d)$ is defined as $\Phi_{\xi}|_{\xi^{\perp}}=\varphi_{\xi}$, $\Phi_{\xi}(\xi)=\xi$, where $T_{\xi}(a_{\xi})=\varphi_{\xi}(a_{\xi})+b_{\xi}$. Writing (\ref{vertline})
in terms of ${\mathbb T}_{\xi_k}$ we have
$$
{\mathbb T}_{\xi_k}(l_i\cap\xi^{\perp}_k)=  \Phi_{\xi_k}(l_i\cap\xi^{\perp}_k-l_i\cap\xi^{\perp})+\Phi_{\xi_k}(l_i\cap\xi^{\perp})+b_{\xi_k}=\tilde{l}_{\sigma(i)}\cap\xi^{\perp}_k.
$$
Without loss of generality, both polytopes $P$ and $Q$ are located inside a large ball. Hence, the entries $\{a_{ij}^k\}_{k\in \mathbb{N}}$, $i,j =1,2,\ldots,d$, of the matrix corresponding to the transformation $\Phi_{\xi_k}$, and the coordinates $b_j^k$, $j=1,2,\ldots,d$, of the vector $b_{\xi_k}$ are bounded functions of $\xi_k$. By compactness, we can assume that $\{a_{ij}^k\}_{k\in \mathbb{N}}$ and $b_j^k$ are convergent to the corresponding entries of $\tilde{\Phi}_{\xi} = \lim_{k \to \infty} \Phi_{\xi_k}$ and $\tilde{b}_{\xi} = \lim_{k \to \infty} b_{\xi_k}$ respectively. This yields
$$
\tilde{\mathbb{T}}_{\xi}(l_i \cap \xi^{\perp}) = \tilde{\Phi}_{\xi}(l_i \cap \xi^{\perp}) + \tilde{b}_{\xi} = l_{\sigma(i)} \cap \xi^{\perp},
$$
where $\tilde{\mathbb{T}}_{\xi} = \lim_{k \to \infty} \mathbb{T}_{\xi_k}$. Hence, there exists $\tilde{T}_{\xi}$, such that the corresponding $\tilde{f}_{\xi}$ satisfies $\tilde{f}_{\xi}(l_i \cap \xi^{\perp}) = l_{\sigma(i)} \cap \xi^{\perp}$, $\forall i = 1,\ldots,k$. This means that $\xi \in V_{\sigma}$ and $V_{\sigma}$ is a closed.

By the Baire category Theorem (we argue as in {\it Step 2} in the proof of Theorem \ref{th1}), there exists a permutation $\sigma_o$ such that the interior $\textrm{int}(V_{\sigma_o})$ is non-empty.
Hence,   (\ref{perm}) holds  with  $\sigma=\sigma_o$ and $U_o=\textrm{int}(V_{\sigma_o})$.

{\it Step 3}. By Lemma \ref{edges} the set of edges intersected by $\xi^{\perp}$, $\xi\in U_o$, coincides with the set of edges intersected by $\xi^{\perp}$, $\xi\in U_1$.
To show that the corresponding lines in $L$ and $\tilde{L}$  coincide up to a reflection in the origin, we will consider two cases: at least two lines in $L$ are not parallel, or  all lines in $L$ are parallel.

Let $\xi\in U_o$ and let $\sigma$ be as in (\ref{perm}). If at least two lines from $L$,   say $l_1, l_2$,  are not parallel, we
 apply the second part of Lemma \ref{lines} to  the pair of lines $(l_1, l_2)$ with vertices $v_1(\xi)=\xi^{\perp}\cap l_1$, $v_2(\xi)=\xi^{\perp}\cap l_2$ and the corresponding pair  of lines $(\tilde{l}_{\sigma(1)}, \tilde{l}_{\sigma(2)})$$\in \tilde{L}$ with vertices $\tilde{v}_{\sigma(1)}(\xi)=\xi^{\perp}\cap \tilde{l}_{\sigma(1)}$, $\tilde{v}_{\sigma(2)}(\xi)=\xi^{\perp}\cap \tilde{l}_{\sigma(2)}$. We see that the lines coincide up to a reflection in the origin.


Assume that $l_1 = \tilde{l}_{\sigma(1)}$ and $l_2 = \tilde{l}_{\sigma(2)}$. Then we choose any other line $l_m$ in $L$, such that $l_m\not\parallel l_1$ (if such $l_m$ does not exists, choose $l_m \not\parallel l_2$). Then we apply the second part of Lemma \ref{lines} to the pair $(l_m, l_1)$.
Recall that the permutation $\sigma$ is the same in all $\xi^{\perp}$, such that $\xi \in U_1$, and we already have that $l_{\sigma(1)} = l_1$. This implies that $\tilde{l}_{\sigma(m)} = l_m$, since none of the lines passes through the origin, i.e. $l_i \neq -l_i$ or $\tilde{l}_j \neq -\tilde{l}_j$ for any $i$ or $j$.
We can repeat this argument in the case when $\tilde{l}_{\sigma(1)}=-l_1$ and $\tilde{l}_{\sigma(2)}=-l_2$ to conclude that $\tilde{l}_{\sigma(m)} = -l_m$ for any other line $l_m$ from $L$. Thus,
\begin{equation}\label{shift}
l_j= \tilde{l}_{\sigma(j)} \quad \forall j=1,\ldots,k \quad \textrm{or} \quad l_j= -\tilde{l}_{\sigma(j)} \quad \forall j=1\ldots,k.
\end{equation}

Suppose now that for all $\xi \in U_1$, $\xi^{\perp}$ intersects only parallel  lines in $L$. We claim that (\ref{shift}) holds in this case as well.

Consider a triple of lines $l_p, l_q, l_r \in L$, such that $v_i(\xi) = l_i \cap \xi^{\perp}, (i =p,q,r)$ are vertices of $P \cap \xi^{\perp}$. Notice that in this case the triple doesn't belong to a single two-dimensional plane. Apply Lemma \ref{lines} to the pairs $l_p, l_q$ and $l_p, l_r$, on the open set $\xi \in U_1 \setminus (U_1 \cap Y_{pqr})$. Here $Y_{pqr}$ is a nowhere dense subset obtained from applying Lemma \ref{perpl} to the above triple. Assume that this yields
$$
l_{\sigma(p)} = -l_p + c_{pq}, \quad l_{\sigma(q)} = -l_q + c_{pq}, \quad c_{pq} \in \mathbb{E}^d,
$$
but
$$
l_{\sigma(p)} = l_p + b_{pr},  \quad l_{\sigma(r)} = l_r + b_{pr}, \quad b_{pr} \in \mathbb{E}^d.
$$
Then consider two triangles $v_p(\xi) v_q(\xi) v_r(\xi)$ and $\tilde{v}_{\sigma(p)}(\xi) \tilde{v}_{\sigma(q)}(\xi) \tilde{v}_{\sigma(r)}(\xi)$ in $\xi^{\perp}$ (see Figure \ref{triang}). Notice that, by Lemma \ref{perpl}, $\angle v_q(\xi) v_p(\xi) v_r(\xi) \neq \frac{\pi}{2}$.

\begin{figure}[h]
  \centering
  \includegraphics[scale=0.4]{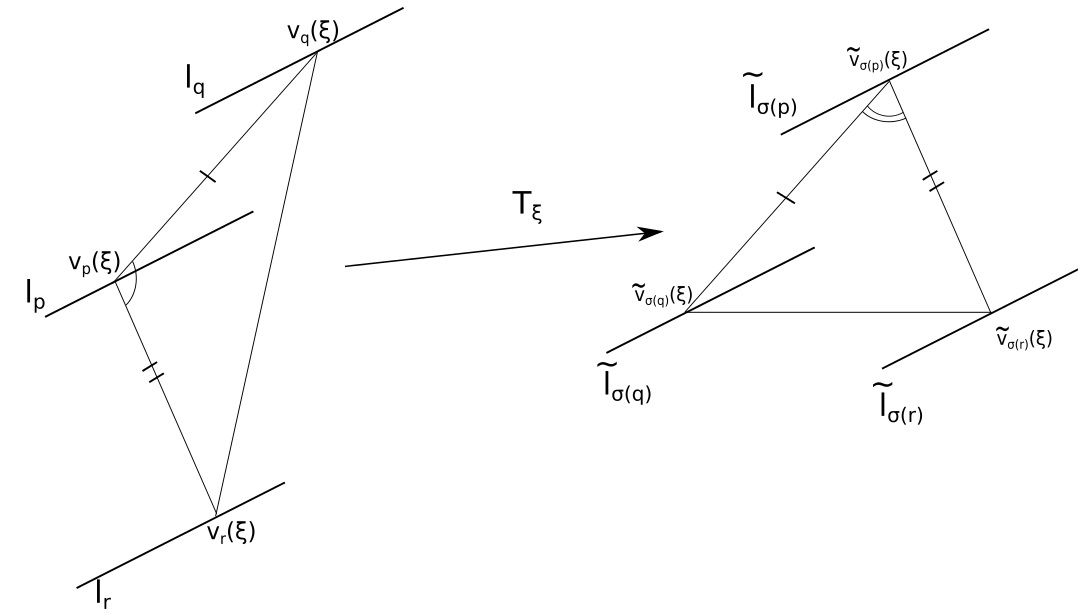}\\
  \caption{$T_{\xi}\left(v_p(\xi) v_q(\xi) v_r(\xi)\right) \neq \tilde{v}_{\sigma(p)}(\xi) \tilde{v}_{\sigma(q)}(\xi) \tilde{v}_{\sigma(r)}(\xi)$}\label{triang}
\end{figure}

On the other hand, since $\angle v_q(\xi) v_p(\xi) v_r(\xi) \neq \angle \tilde{v}_{\sigma(q)}(\xi) \tilde{v}_{\sigma(p)}(\xi) \tilde{v}_{\sigma(r)}(\xi)$, these triangles are not congruent under the fixed permutation $\sigma$, which contradicts the condition of congruency of $P \cap \xi^{\perp}$ and $Q \cap \xi^{\perp}$. Hence, for the triples we have
$$
l_{\sigma(p)} = l_p + b_{pr},  \quad l_{\sigma(q)} = l_q + b_{pr}, \quad l_{\sigma(r)} = l_r + b_{pr}, \quad b_{pr} = b \in \mathbb{E}^d,
$$
or
$$
l_{\sigma(p)} = -l_p + c_{pr},  \quad l_{\sigma(q)} = -l_q + c_{pr}, \quad l_{\sigma(r)} = -l_r + c_{pr}, \quad c_{pr} = b \in \mathbb{E}^d.
$$

Now we can repeat the same argument for any triples of lines in $L$ and $\tilde{L}$. Since $L$ contains a finite number of lines, we exclude a finite number of sets $\{Y_{pqr}\}_{l_p,l_q,l_r \in L}$ obtained from Lemma \ref{perpl}. Recall that each of these sets is closed and nowhere dense on $S^{d-1}$. We conclude that $\tilde{L} = L + b$ or $\tilde{L} = - L + b$. Assume that $\tilde{L} = L + b$, since we can always consider polytope $-Q$ instead of $Q$. We claim that $b=0$.

Consider all edges of $P$ that have a common vertex $q$ with the edge $w_i\subset l_i \in L$, say, $w_p$, $p=1,\dots, s(i)$.

 Consider also  all lines $l_p$ containing $q$, and let $U_{i,p}$ be the corresponding non-empty connected component of $U$ for which $span\{l_i, l_p\} \not \subset\xi^{\perp}$ for all $\xi\in U_{i,p}$ (since the interior of the solid angle $A_q$ with vertex at $q$ and edges $w_1,\ldots, w_{s(i)}$ has dimension $d$, and $\dim \xi^{\perp} = d-1$, this connected component always exists). In particular, $\textrm{dim}(\textrm{span}(l_i, l_p, O))=3$.

We apply the third part of Lemma \ref{lines} to the pairs $(l_i, l_p)$ and $(\tilde{l}_{\sigma(i)}, \tilde{l}_{\sigma(p)})$ and the corresponding vertices $v_i(\xi)=\xi^{\perp}\cap l_i$,
$v_p(\xi)=\xi^{\perp}\cap l_p$ and $\tilde{v}_{\sigma(i)}(\xi)=\xi^{\perp}\cap \tilde{l}_{\sigma(i)}$,
$\tilde{v}_{\sigma(p)}(\xi)=\xi^{\perp}\cap \tilde{l}_{\sigma(p)}$. Here the lines $l_i$ and $\tilde{l}_{\sigma(i)}$ belong to the corresponding sets of lines $L(U_{i,p})$ and
$\tilde{L}(U_{i,p})$, and $\sigma$ is the fixed permutation analogous to the one in (\ref{perm}).

If  there exists $i$, such that for the line $l_i \in L$ Lemma \ref{lines} gives $l_i=\tilde{l}_{\sigma(i)}$, $l_p=\tilde{l}_{\sigma(p)}$, for all
$p=1,\dots, s(i)$, then $b=0$.
Indeed, in this case
 both polytopes $P$ and $Q$  have a common solid angle $A_q$ containing $q$.
Let $H\supset l_i$, $H\cap P=w_i$, be a supporting hyperplane to both $P$ and $Q$ with an inner (with respect to $P$) unit normal $n_H$. We will consider  two cases, $n_H\cdot b>0$ and $n_H\cdot b<0$, and show that both are impossible (by changing $H$ slightly we can assume that $n_H\cdot b\neq 0$).
By the above, $P\subset \textrm{conv}(l_1,\dots,l_k)$ and $Q\subset \textrm{conv}(l_{\sigma(1)},\dots,l_{\sigma(k)})=b+\textrm{conv}(l_1,\dots,l_k)$.
If $n_H\cdot b>0$, then $Q$ should lie in the translated half-space ${\mathcal H}+b$, where  ${\mathcal H}$ is the half-space  with boundary $H$ containing $P$ and $Q$.
Since $Q$  contains $w_i$ this is impossible, unless $b=0$. If $n_H\cdot b<0$, then $Q$ contains $A_q$ and $w_i+b$, which contradicts the convexity of $Q$. Hence, $b=0$.

If for every $i=1,\dots,k$ (for every line $l_i \in L$) there exists $p=p(i)$, $p=1,\dots, s(i)$, for which Lemma \ref{lines} yields $l_i=-\tilde{l}_{\sigma(i)}$, $l_p=-\tilde{l}_{\sigma(p)}$,
 we have  $l_i=-\tilde{l}_{\sigma(i)}$ for all $i=1,\dots, k$.

Thus, (\ref{shift}) holds in the case when all lines in $L$ are parallel as well.

{\it Step 4}.
We proved that for any $\xi \in U_i$ the corresponding lines $L$ and $\tilde{L}$ that are intersected by $\xi^{\perp}$ satisfy (\ref{shift}). This implies that for any vertex $v_i(\xi) \in P \cap \xi^{\perp}$ there exists a vertex $v_{\sigma(i)_{\xi}}(\xi) \in Q\cap \xi^{\perp}$ such that
$$
v_{i}(\xi) = \tilde{v}_{\sigma(i)}(\xi) \quad \forall i=1,\ldots,k \quad \textrm{or} \quad v_{i}(\xi) = -\tilde{v}_{\sigma(i)}(\xi) \quad \forall i=1,\ldots,k.
 $$
Since any convex polytope is a convex hull of its vertices, we conclude that
$$
Q\cap \xi^{\perp} = P\cap \xi^{\perp} \quad \forall \xi \in U \quad \textrm{or} \quad Q\cap \xi^{\perp} = -P\cap \xi^{\perp} \quad \forall \xi \in U.
$$
This implies that for all $\xi\in U\setminus (Y\cup Y_1)$ the radial functions of $P$ and $Q$ satisfy $\rho_P(u) = \rho_Q(u)$ for all $u \in \xi^{\perp}$ or
$\rho_P(-u) = \rho_Q(u)$ for all $u \in \xi^{\perp}$.
Since the  radial functions are continuous, we can prove that the same holds for any $\xi \in Y\cup Y_1$, and hence for all $\xi\in S^{d-1}$.  Finally, we can apply Theorem \ref{hedgehog} with $j=d-1$, $a_{\alpha}=0$,  to conclude that $\rho_P(u) = \rho_Q(u)$ for all $u\in S^{d-1}$ or $\rho_P(u) =\rho_Q(-u)$ for all $u\in S^{d-1}$(or see Lemma 1 in \cite{R2}) . Theorem  \ref{th2} is proved in the case $k=d-1$.

\subsection{Proof of Theorem \ref{th2} in the case $2\leq k < d-1$}
We use  induction on $k$.
Let $H$ be any $(k+1)$-dimensional subspace of ${\mathbb E^d}$. We apply the result for $d=k+1$ to the bodies $P\cap H$ and $Q\cap H$ and their sections
$(P\cap H)\cap J$ and $(Q\cap H)\cap J$ for all $k$-dimensional subspaces $J\subset H$.
We obtain that $P\cap H= Q\cap H$ or $P\cap H = -Q\cap H$ for all $H$. We proceed and the result follows after finitely many steps.
The proof of Theorem \ref{th2}  is finished.

\end{document}